\let\csname ver@amsthm.sty\endcsname\relax
\crefname{lem}{Lemma}{Lemmas}
\crefname{cor}{Corollary}{Corollaries}
\crefname{thm}{Theorem}{Theorems}
\crefname{assumption}{Assumption}{Assumptions}
\numberwithin{equation}{section}
\declaretheorem[style=plain,numberwithin=section,name=Theorem]{thm}
\declaretheorem[style=plain,sibling=thm,name=Lemma]{lem}
\declaretheorem[style=plain,sibling=thm,name=Proposition]{prop}
\declaretheorem[style=plain,sibling=thm,name=Corollary]{cor}
\def\@endtheorem{\endtrivlist}%
\declaretheorem[style=definition,sibling=thm,name=Definition]{defn}
\declaretheorem[style=remark,qed=$\triangleleft$,sibling=thm,name=Remark]{remark}
\def\@endtheorem{\endtrivlist\@endpefalse }%
\numberwithin{thm}{section}
\newlist{casenv}{enumerate}{4}
 \setlist[casenv]{leftmargin=*,align=left,widest={iiii}}
 \setlist[casenv,1]{label={{\itshape\ \casename} \arabic*.},ref=\arabic*}
 \setlist[casenv,2]{label={{\itshape\ \casename} \roman*.},ref=\roman*}
 \setlist[casenv,3]{label={{\itshape\ \casename\ \alph*.}},ref=\alph*}
 \setlist[casenv,4]{label={{\itshape\ \casename} \arabic*.},ref=\arabic*}
\def\[#1\]{\begin{align}#1\end{align}}
\def\*[#1\]{\begin{align*}#1\end{align*}}
\newcommand{\LATER}[1]{\error}
\newcommand{\fLATER}[1]{\error}
\newcommand{\TBD}[1]{\error}
\newcommand{\fTBD}[1]{}
\newcommand{\PROBLEM}[1]{\error}
\newcommand{\fPROBLEM}[1]{\error}
\newcommand{\NA}[1]{#1}
 \providecommand{\casename}{Case}
\begin{document}

\newcommand{\ErdosRenyi}{Erd\H{o}s--R\'enyi--Gilbert}

\newcommand{\defnphrase}[1]{\emph{#1}}

\global\long\def\defas{\vcentcolon=}

\global\long\def\st{\,:\,}

\global\long\def\dist{\ \sim\ }

\global\long\def\given{\mid}

\global\long\def\distiid{\overset{iid}{\dist}}

\global\long\def\distind{\overset{ind}{\dist}}

\global\long\def\Naturals{\mathbb{N}}

\global\long\def\Rationals{\mathbb{Q}}

\global\long\def\Reals{\mathbb{R}}

\global\long\def\BorelSets{\mathcal{B}}

\global\long\def\Nats{\mathbb{N}}

\global\long\def\Ints{\mathbb{Z}}

\global\long\def\NNInts{\Ints_{+}}

\global\long\def\NNExtInts{\overline{\Ints}_{+}}

\global\long\def\Cantor{\2^{\mathbb{N}}}

\global\long\def\NNReals{\Reals_{+}}

\global\long\def\as{\textrm{ a.s.}}

\global\long\def\epi{\textrm{epi}}

\global\long\def\intr{\textrm{int}}

\global\long\def\conv{\textrm{conv}}

\global\long\def\cone{\textrm{cone}}

\global\long\def\aff{\textrm{aff}}

\global\long\def\cone{\textrm{cone}}

\global\long\def\dom{\textrm{dom}}

\global\long\def\cl{\textrm{cl}}

\global\long\def\ri{\textrm{ri}}

\global\long\def\grad{\nabla}

\global\long\def\imp{\Rightarrow}

\global\long\def\downto{\!\downarrow\!}

\global\long\def\upto{\!\uparrow\!}  \global\long\def\AND{\wedge}

\global\long\def\OR{\vee}

\global\long\def\NOT{\neg}

\global\long\def\PowerSet{\mathcal{P}}

\global\long\def\Measures{\mathcal{M}}

\global\long\def\ProbMeasures{\mathcal{M}_{1}}

\global\long\def\equaldist{\overset{d}{=}}

\global\long\def\equalprob{\overset{p}{=}}

\global\long\def\inv{^{-1}}

\global\long\def\norm#1{\lVert#1 \rVert}

\global\long\def\event#1{\left\lbrace #1 \right\rbrace }

\global\long\def\tuple#1{\langle#1 \rangle}

\global\long\def\bspace{\Omega}

\global\long\def\bsa{\mathcal{A}}

\global\long\def\borelspace{(\bspace,\bsa)}

\global\long\def\card#1{\##1}

\global\long\def\iid{i.i.d.\ }

\global\long\def\iff{iff\ }

\global\long\def\gprocess#1#2{(#1)_{#2}}

\global\long\def\nprocess#1#2#3{\gprocess{#1_{#3}}{#3 \in#2}}

\global\long\def\process#1#2{\nprocess{#1}{#2}n}

\newcommand{\EE}{\mathbb E}

\global\long\def\expect#1{\EE [#1]}

\global\long\def\var#1{\mbox{var}\left[#1\right]}

\global\long\def\equalas{\overset{\mathrm{a.s.}}{=}}

\global\long\def\abs#1{\lvert#1 \rvert}

\global\long\def\norm#1{\lVert#1\rVert}

\global\long\def\inedge#1{e_{#1}^{\mbox{in}}}

\global\long\def\outedge#1{e_{#1}^{\mbox{out}}}

\global\long\def\intd{\mathrm{d}}

\global\long\def\suchthat{\mid}

\global\long\def\exclude{\backslash}

\global\long\def\Pr{\mathrm{P}}

\global\long\def\convPr{\xrightarrow{\,p\,}}
\global\long\def\convD{\xrightarrow{\,d\,}}

\global\long\def\convDist{\xrightarrow{\,d\,}}

\global\long\def\floor#1{\lfloor#1\rfloor}

\global\long\def\asympLim#1{,\ #1\rightarrow\infty}

\global\long\def\diri{\mathrm{Diri}}

\global\long\def\categ{\mathrm{Cat}}

\global\long\def\betaDist{\mathrm{Beta}}

\global\long\def\bern{\mathrm{Bernoulli}}

\global\long\def\bernDist{\mathrm{Bern}}

\global\long\def\binDist{\mathrm{Bin}}

\global\long\def\uniDist{\mathrm{Uni}}

\global\long\def\poiDist{\mathrm{Poi}}

\global\long\def\gammaDist{\mathrm{Gamma}}

\global\long\def\PP{\Pi}

\global\long\def\PPnu{\Pi_{\nu}}

\global\long\def\vertexset#1{v\left(#1\right)}

\global\long\def\edgeset#1{e\left(#1\right)}

\global\long\def\PPbelowth#1{\Pi_{\nu,\le#1}}

\global\long\def\PPaboveth#1{\Pi_{\nu,>#1}}

\global\long\def\linverse{^{-1}}

\global\long\def\threshold{T_{\nu}}

\global\long\def\popthreshold{T_{\nu,\mbox{pop}}}

\global\long\def\popgraph{P_{\nu}}

\global\long\def\upperthreshold{T_{\nu,u}}

\global\long\def\numDegNu#1{N_{\nu,#1}}

\global\long\def\PP{\Pi}

\global\long\def\PPnu{\Pi_{\nu}}

\global\long\def\vertexset#1{v\left(#1\right)}

\global\long\def\edgeset#1{e\left(#1\right)}

\global\long\def\PPbelowth#1{\Pi_{\nu,\le#1}}

\global\long\def\PPaboveth#1{\Pi_{\nu,>#1}}

\global\long\def\linverse{^{-1}}

\global\long\def\threshold{T_{\nu}}

\global\long\def\popthreshold{T_{\nu,\mbox{pop}}}

\global\long\def\popgraph{P_{\nu}}

\global\long\def\upperthreshold{T_{\nu,u}}

\global\long\def\numDegNu#1{N_{\nu,#1}}

\global\long\def\pointspace{\mathbb{M}}

\global\long\def\degNuFn{D_{\nu}}

\global\long\def\law{\mathcal{L}}

\newcommand{\dProk}[2]{d_{\mathrm{p}}(#1, #2)}

\newcommand{\dTV}[2]{\| #1 - #2 \|_{\mathrm{TV}}}

\newcommand{\Lebesgue}{\Lambda}

\newcommand{\NatSubs}[1]{\tilde \Nats_{#1}}

\newcommand{\StarF}{S} \newcommand{\IsoF}{I}

\newcommand{\convEst}{\to_{\mathrm{est}}}

\newcommand{\convEstGP}{\to_{\mathrm{GP}}}

\newcommand{\convEstGS}{\to_{\mathrm{GS}}}

\newcommand{\ExtNats}{\overline{\Nats}}

\newcommand{\W}{\mathcal{W}}

\newcommand{\GP}{\mathcal{G}}

\newcommand{\GS}[1]{\mathscr G(#1)}

\newcommand{\KEG}{\Gamma}

\newcommand{\GPD}[2]{\mathrm{uKEG}(#1,#2)}

\newcommand\KEGD[2]{\KEGDinf{#1,#2}}
\newcommand\KEGDinf[1]{\mathrm{KEG}(#1)}

\newcommand{\ICP}[1]{[#1]}
\newcommand{\RGraphs}{\mathcal G_{\Reals}}
\newcommand{\URGraphs}{{\RGraphs^{\sim}}}
\newcommand{\URGraphSeqs}{\URGraphs^{\infty}}

\newcommand{\CNGraphs}{\overline{\mathcal G}_{\Nats}}
\newcommand{\GraphSeqs}{\NGraphs^*}
\newcommand{\allreps}{\mathsf{adjmatrices}}

\newcommand{\CAM}[1]{\mathcal G(#1)}

\newcommand{\edges}{e}

\newcommand{\vertices}{v}

\newcommand{\empGraphon}{\tilde{W}}

\newcommand{\kegname}{graphex process}
\newcommand{\kegnames}{graphex processes}

\title{Sampling and estimation for (sparse) exchangeable graphs}
  
 \author[V.~Veitch]{Victor Veitch} 
 \address{University of
 Toronto\\Department of Statistical Sciences\\Sidney Smith Hall\\100
 St George Street\\Toronto, Ontario\\M5S 3G3\\Canada}

 \author[D.~M.~Roy]{Daniel M.~Roy}
\address{University of
 Toronto\\Department of Statistical Sciences\\Sidney Smith Hall\\100
 St George Street\\Toronto, Ontario\\M5S 3G3\\Canada}

\begin{abstract}
Sparse exchangeable graphs on $\mathbb{R}_+$, and the associated graphex framework for sparse graphs,
generalize exchangeable graphs on $\mathbb{N}$, and the associated graphon framework for dense graphs.
We develop the graphex framework as a tool for statistical network analysis
by identifying the sampling scheme that is naturally associated with the models of the framework,
and by introducing a general consistent estimator for the parameter (the graphex) underlying these models.
The sampling scheme is a modification of independent vertex sampling that throws away vertices that are isolated in the sampled subgraph.
The estimator is a dilation of the empirical graphon estimator, which is known to be a consistent estimator for dense exchangeable graphs; both can be understood as graph analogues to the empirical distribution in the i.i.d.\ sequence setting. 
Our results may be viewed as a generalization of consistent estimation via the empirical graphon from the dense graph regime to also include sparse graphs.

\end{abstract}

\maketitle

\begin{center}
  \begin{minipage}{.80\linewidth}
    \setcounter{tocdepth}{1}
    \tableofcontents
  \end{minipage}
\end{center}

\section{Introduction}

This paper is concerned with 
mathematical foundations for 
the statistical analysis of real-world networks.
For densely connected networks, 
the graphon framework has emerged as powerful tool for both theory and applications in network analysis;
many of the models used in practice are within the remit of this framework (see \citep{Orbanz:Roy:2015} for a review).
However, in most real-world situations, networks are sparsely connected; i.e., as one studies larger networks, one finds that they tend to exhibit only a vanishing fraction of all possible links.
 
In this paper, we continue our study of sparse exchangeable graphs, i.e., random graphs
whose vertices may be identified with nonnegative reals, $\mathbb{R}_+$, and  
whose edge sets are then modeled by exchangeable point processes on $\mathbb{R}_+^2$.
In a pioneering paper, \citet{Caron:Fox:2014} introduced the notion of sparse exchangeable graphs in the context of nonparametric Bayesian analysis.
Building on this work,
the general family of all sparse exchangeable graphs was characterized by \citet{Veitch:Roy:2015,Borgs:Chayes:Cohn:Holden:2016}, and shown to generalize the graphon models for dense graphs to include the sparse graph regime.
Sparse exchangeable graphs have a number of desirable properties, including that they define a natural projective family of subgraphs of growing size, which can be used to model the process of observing a larger and larger fraction of a fixed underlying network. This property also provides a firm foundation for the study of various asymptotics, as  demonstrated by \citep{Veitch:Roy:2015, Borgs:Chayes:Cohn:Holden:2016} and the results here.
\citep{Veitch:Roy:2015} characterized the asymptotic degree distribution and connectedness of sparse exchangeable graphs, demonstrating that sparse exchangeable graphs allow for sparsity and admit the rich graph structure (such as small-world connectivity and power law degree distributions) found in large real-world networks.  On this basis, \citeauthor{Veitch:Roy:2015} argue that sparse exchangeable graphs can serve as a general statistical model for network data.  Despite our understanding of these models, the statistical meaning remains somewhat opaque. Put simply, when would it be natural to use the sparse exchangeable graph model?

The present paper further develops this framework for statistical network analysis by 
answering two fundamental questions:
\begin{enumerate}
\item What is the notion of sampling naturally associated with this statistical network model? and
\item How can we use an observed dataset to consistently estimate the statistical network model? 
\end{enumerate}
The answers to these questions significantly clarify both the meaning of the modeling framework,
and its connection to the dense graph framework and the classical i.i.d.\ sequence framework at the foundation of classical statistics.
These questions may be viewed as specific examples of a general approach to formalizing the problem of statistical analysis 
on network data being carried out by Orbanz~\citep{Orbanz:2016}.

To explain the results, we first recall the modeling framework of \citep{Veitch:Roy:2015,Borgs:Chayes:Cohn:Holden:2016}.
The basic setup introduces a family of finite, symmetric point processes $\KEG_s \subseteq [0,s] \times [0,s]$, for $s \in \NNReals$,
where each $\KEG_s$ is interpreted as the edge set of a random graph whose vertices are points in the interval $[0,s]$.
Hence, for $\theta,\theta' \in [0,s]$, there is an edge between $\theta$ and $\theta'$ if and only if  $(\theta, \theta') \in \KEG_s$.
The edge set $\KEG_s$ determines a graph over its active vertex set: those elements $\theta \in [0,s]$ such that $\theta$ exhibits some edge in $\KEG_s$.
Accordingly, $(\KEG_s)_{s\in\NNReals}$ are understood as ($\NNReals$-labeled) graph-valued random variables
that are nested in the sense that $\KEG_r \subseteq \KEG_s $ whenever $r \le s$.
We will argue below that the indices $s \in \NNReals$ are properly understood as specifying the sample \emph{size} of the corresponding observations $\KEG_s$.  

\newcommand{\bvtheta}{\boldsymbol{\vartheta}}
The natural parameter of the distributions of these graphs is a \emph{graphex} $\W = (I,S,W)$ 
defined on some locally finite measure space $(\bvtheta,\mathcal B_{\bvtheta},\nu)$
where 
$I \in \NNReals$, $S: \bvtheta \to \NNReals$ is an integrable function, and
$W: \bvtheta^2 \to [0,1]$ is a symmetric function satisfying several weak integrability conditions we formalize later.
(Without loss of generality, one can always take $(\bvtheta,\mathcal B_{\bvtheta})$ to be the non-negative reals, $\NNReals$, with its standard Borel structure, and take $\nu$ to be Lebesgue measure $\Lebesgue$.)
The component $W$ is a natural generalization of the graphon of the dense graph models \citep{Veitch:Roy:2015,Borgs:Chayes:Cohn:Holden:2016}, 
and for this reason we refer to it as a \emph{graphon}.
Although the results of the present paper hold for general graphexes, for simplicity of exposition, we 
will temporarily restrict our attention to graphexes of the form $\W=(0,0,W)$, giving a full treatment in subsequent sections. 

We begin by giving a construction of the {\kegname} for a graphex of the form $\W=(0,0,W)$:
Let
$\PP$ be a Poisson (point) process on $\NNReals \times \bvtheta$ with intensity $\Lebesgue \otimes \nu$, i.e., for two intervals $J_1,J_2 \subseteq \NNReals$ and two measurable subsets $B_1,B_2 \subseteq \bvtheta$, the number of points of $\PP$ in $J_1 \times B_1$ and in $J_2 \times B_2$ are Poisson random variables with mean $|J_1| \nu(B_1)$ and $|J_2| \nu(B_2)$ respectively, where $|J_i| = \Lebesgue(J_i)$ is the length of the interval, and these variables are even independent when $J_1\times B_1 \cap J_2 \times B_2 = \emptyset$. (If $\nu$ is also Lebesgue measure on $\NNReals$, then $\PP$ is then simply a unit-rate Poisson process on $\NNReals^2$.)
Write $\{(\theta_{i},\vartheta_{i})\}_{i\in \Nats}$ for the points of $\PP$, 
let $\PP_s$ be the restriction of $\PP$ to $[0,s]^2$, 
and let $(\zeta_{\{i,j\}})_{i \le j \in \Nats}$ be an i.i.d.\ collection of uniform random variables in $[0,1]$. 

For every $s \in \NNReals$, define the size-$s$ random edge set $\KEG_s$ on $[0,s]$
to be exactly the set of distinct pairs $(\theta_i,\theta_j)$ where $\theta_i,\theta_j \le s$
and $\zeta_{\{i,j\}} \le W(\vartheta_i,\vartheta_j)$.  In other words, 
for every distinct pair of points $(\theta,\vartheta),(\theta',\vartheta') \in \PP_s$,
the edge set $\KEG_s$ includes the edge $(\theta,\theta')$
independently with probability $W(\vartheta,\vartheta')$.
The vertex set of the graph corresponding to the edge set $\KEG_s$ is defined to be those points that appear in some edge; hence, this model does not allow for isolated vertices.
The entire family of graphs $\KEG_s$, for $s \in \NNReals$, is a projective family with respect to subset restriction, i.e., $\KEG_r = \KEG_s \cap [0,r]^2$ for every $r,s \in \NNReals$ with $r \le s$.
See \cref{fig:gen_proc} for an illustration of the generative model for a general graphex $\W$ defined on $\NNReals$ with Lebesgue measure.  A rigorous definition is provided in \cref{prelim}.

We refer to $\KEG$ as the \emph{{\kegname}} generated by $\W$: 
we also use this nomenclature for the family $(\KEG_s)_{s\in\NNReals}$.
Note that the $\KEG$ has the property that its distribution is invariant to the action of the maps $(x,y) \mapsto (\phi(x),\phi(y))$, where $\phi : \NNReals \to \NNReals$ is measure preserving.  A random graph with this property is called a sparse exchangeable graph \citep{Caron:Fox:2014}.  In \cref{prelim}, we quote the result due to \citet{Veitch:Roy:2015,Borgs:Chayes:Cohn:Holden:2016}, building off work by \citet{Kallenberg_Random_Meas_Plane}, that proves that every (sparse) exchangeable graph is a {\kegname} generated by some (potentially random) graphex.
For a finite labeled graph $G$, such as each $\KEG_s$, for $s \in \NNReals$, 
we will write $\CAM{G}$ to denote the \emph{unlabeled}\footnote{The unlabelled graph corresponding to a labelled graph $G$ is the equivalence class of graphs isomorphic to $G$. Restricting ourselves to finite unlabelled graphs, we can represent the unlabelled graphs formally in terms of their homomorphism counts, $(N_F)$, where $F$ ranges over the countable set of all finite simple graphs whose vertex set is $[n]$ for some $n \in \Nats$, and $N_F$ is the number of homomorphisms from $F$ to $G$.}
 graph corresponding to $G$.

\begin{figure}
 \includegraphics[width=0.99\linewidth]{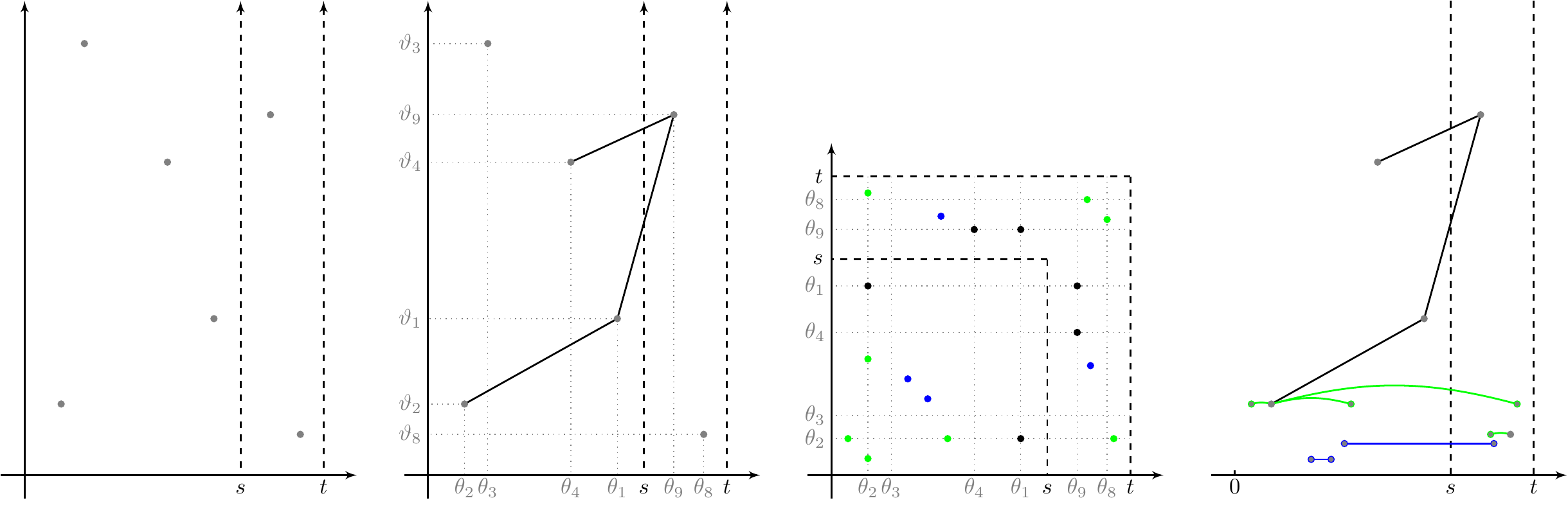}
\caption{\label{fig:gen_proc}
  Generative process of a {\kegname} generated by a graphex $\W=(I,S,W)$ defined on $\NNReals$ with Lebesgue measure, 
  observed at sizes $s$ and $t$.
  First panel: a (necessarily truncated) realization of the latent Poisson process $\PP_t$ on $[0,t] \times \NNReals$. A countably infinite number of points lie above the six points visualized.
  Second panel: Edges due to the graphon component $W$ are sampled by connecting each distinct pair of points $(\theta_i,\vartheta_i), (\theta_j, \vartheta_j) \in \PP_t$ 
  independently with probability $W(\vartheta_i, \vartheta_j)$. 
  Integrability conditions on $W$ imply that only a finite number of edges will appear, despite there being an infinite number of points in $\PP_t$. Assume the three edges are the only ones.
  Third panel: The edge set $\KEG_t$ represented as an adjacency measure on $[0,t]^2$.  
  The edges in the graphon component appears as (symmetric pairs of) black dots; the edges corresponding to the star component $S$ appear in green; the isolated edges (from the $I$ component) appear in blue. 
  At size $s$, only the edges in $[0,s]^2$ (inner dashed black line) appear in the graph.
  The edges $\{\theta_j, \sigma_{jk}\}$ of the star ($S$) component of the process (green) centered at $\theta_j$ are
  realizations of a rate-$S(\vartheta_j)$ Poisson process $\{\sigma_{jk}\}$ along the line through $\theta_j$ (show as green dots along grey dotted lines).
  Hence, at size $t$, each point $\theta_i$ is the center of $\poiDist(t\,S(\vartheta_i))$ star process rays.
  The edges $\{\rho_i, \rho'_j\}$ generated by the isolated edge ($I$) component of the process (blue)
  are a realization of a rate-$I$ Poisson process on the upper (or lower) triangle of $[0,t]^2$, reflected.
  At size $t$, there are $\poiDist(t^2\,I)$ isolated edges due to this part of the graphex. 
  The final panel shows the graphs corresponding to the sampled adjacency measure at sizes $s$ and $t$. 
}
\end{figure}

The first contribution of the present paper is the identification of 
a sampling scheme that is naturally associated with the {\kegnames}:
\begin{defn}
A \defnphrase{$p$-sampling} of an unlabeled graph $G$ is obtained by 
selecting each vertex of $G$ independently with probability $p \in [0,1]$,
and then returning the edge set of the random vertex-induced subgraph of $G$.
\end{defn}
It is important to note that only the edge set of the vertex-induced subgraph is returned; 
in other words, vertices that are isolated from the other sampled vertices are thrown away.
The key fact about this sampling scheme is that:
For $s > 0$ and $r \in [0,s]$, if $G_r$ is an $r/s$-sampling of $\CAM{\KEG_s}$ then $G_r \equaldist \CAM{\KEG_r}$.
This result justifies the interpretation of the parameter $s$ as a sample size. 

In the estimation problem for the \kegname, 
the observed dataset is a realization of the random sequence of graphs $G_1, G_2, \dots$ such that
$G_k = \CAM{\KEG_{s_k}}$, and $s_1, s_2, \dots$
is some sequence of sizes such that $s_k \upto \infty$ as $k \to \infty$.
The task is to take such an observation and return an estimate for $\W$,
where $\W$ is the graphex that generated $(\KEG_s)_{s\in\NNReals}$.  
Both the formulation and solution of this problem depend on whether the sizes $s_k$ are included 
as part of the observations.

We first treat the simpler case where the sizes are known.
To formalize the estimation problem we must introduce a
notion of when one graphex is a good approximation for another. 
Intuitively, our notion is that, for any fixed $s$, a size-$s$ random graph generated by
an estimator should be close in distribution to a size-$s$ random graph generated by the true graphex.
Let $\GPD{\W}{s}$ be the distribution of an unlabeled size-$s$ \kegname, i.e., the distribution of $\CAM{\KEG_s}$ where $\KEG$ is generated by $\W$.
Approximation is then formalized by the following notion of convergence:
\begin{defn}
Write $\W_k \convEstGP \W$ as $k \to \infty$, when $\GPD{\W_k}{s} \to \GPD{\W}{s}$ weakly as $k \to \infty$, for all $s \in \NNReals$.
\end{defn}
Our goal in the estimation problem is then to take a sequence of observations and use these to produce a sequence of graphexes $\W_1, \W_2, \dots$ 
that are consistent in the sense that $\W_k \convEstGP \W$ as $k \to \infty$. 
This is a natural analogue of the definition of consistent estimation used for the convergence of the empirical cumulative distribution function
in the i.i.d. sequence setting, and of the definition of consistent estimation used for the convergence of the empirical graphon in the dense graph setting.

Let $\vertices(G)$ denote the number of vertices of graph $G$.
Our estimator is the \emph{dilated empirical graphon}
\[
\hat{W}_{(G_k, s_k)} : [0,\vertices(G_k)/s_k)^2 \to \{0,1\},
\] 
defined by transforming
the adjacency matrix of $G_k$ into a step function on $[0,\vertices(G_k)/s_k)^2$
where each pixel has size $1/s_k \times 1/s_k$; see \cref{dilated_emp_graphon}.
Intuitively, when the generating graphex is $\W=(0,0,W)$, we have $s_k \upto \infty$ as $k \to \infty$, 
and the estimator is an increasingly higher and higher resolution pixel picture of the generating graphon. 
Formally, given a non-empty finite graph $G$ with $n$ vertices labeled $1, \dots, n$,
we define the empirical graphon $\empGraphon_{G}:[0,1]^2 \to \{0,1\}$ by partitioning $[0,1]$ into adjacent intervals $I_1, \dots, I_n$
each of length $1/n$ and taking $\empGraphon_{G} = 1$ on $I_i \times I_j$ if $i$ and $j$ are connected in $G$, and taking $\empGraphon = 0$ otherwise.
The dilated empirical graphon with dilation $s$ is then defined by $\hat{W}_{(G, s)}(x,y) = \empGraphon_{G}(x/s,y/s)$.
To map an unlabeled graph to a (dilated) empirical graphon we must introduce a labeling of the vertices.
Notice that
if $\phi:\NNReals\to\NNReals$ is a measure-preserving transformation, $\phi \otimes \phi$ is 
the map $(\phi \otimes \phi) (x,y) = (\phi(x),\phi(y))$, $\W = (I,S,W)$, and 
$\W' = (I, S \circ \phi, W \circ (\phi \otimes \phi))$, 
then $\GPD{\W}{s} = \GPD{\W'}{s}$ for all $s \in \NNReals$.
In particular, the dilated empirical graphon functions corresponding to different labelings of the vertices of $G$ 
are related by obvious measure-preserving transformations in this way. 
For the purposes of this paper, graphexes that give rise to the same distributions over graphs are equivalent.
We then define the empirical graphon of an unlabeled graph to be the empirical graphon of that graph with some
arbitrary labeling, and we define the dilated empirical graphon similarly. 
These functions may be thought as arbitrary representatives of the equivalence class on graphons given by 
equating two graphons whenever they correspond to isomorphic graphs.

\begin{figure}
 \includegraphics[width=0.99\linewidth]{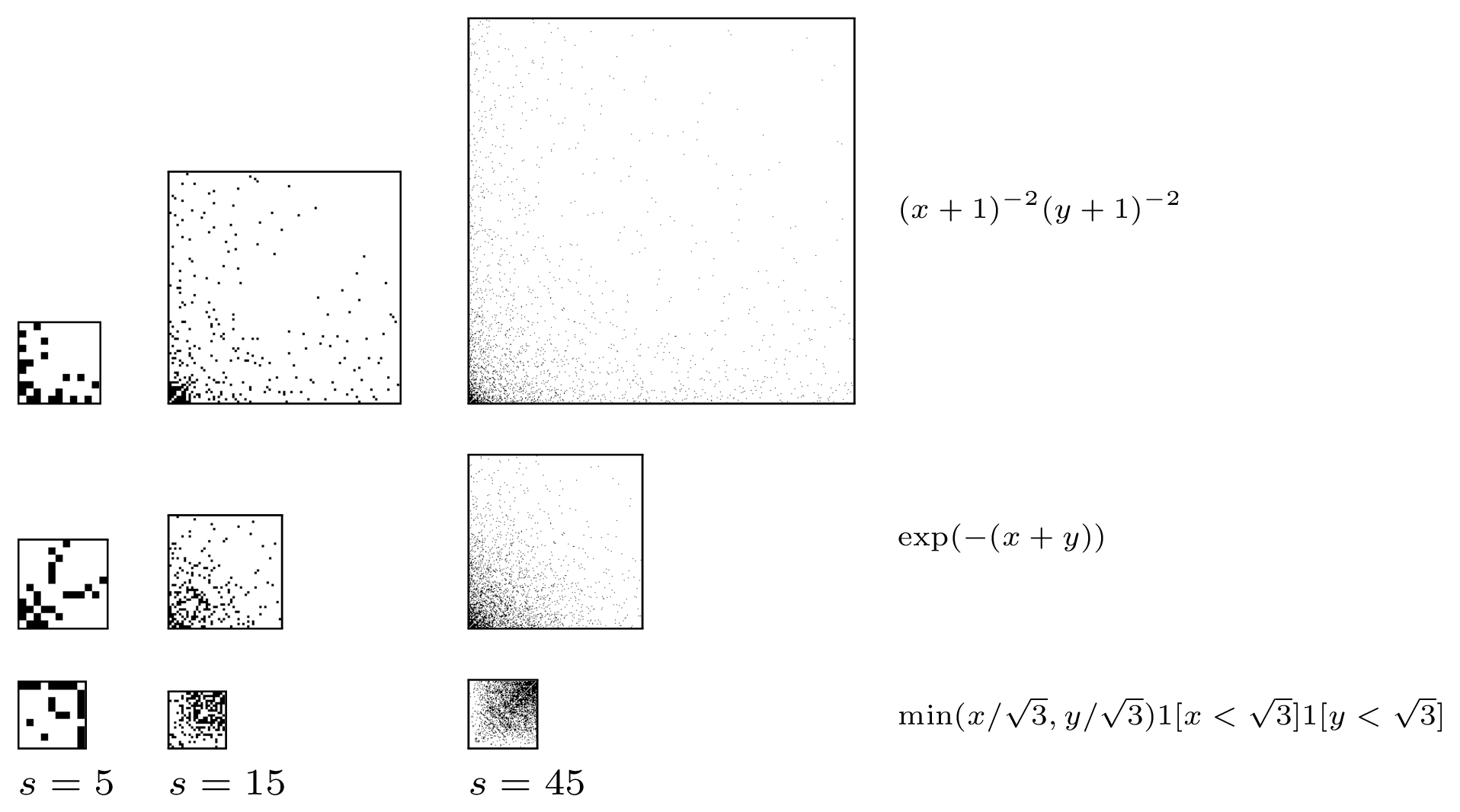}
\caption{\label{dilated_emp_graphon}
  Realizations of dilated empirical graphons of {\kegnames} generated by $(0,0,W)$ for $W$ given in the rightmost column,
  at observation sizes given in the bottom row.
  Note that the ordering of the vertices used to define the estimator is arbitrary.
  Here we have suggestively ordered the vertices according to the latent values from the process simulations;
  with this ordering the dilated empirical graphons are approximate pixel pictures of the generating graphon
  where the resolution becomes finer as the observation size grows.
  All three graphons satisfy $\|W\|_1 = 1$, and thus the expected number of edges (black pixels) at each size $s$ is $\frac{1}{2}s^2$ in each column.
  Note that the rate of dilation is faster for sparser graphs; 
  as established in \cite{Veitch:Roy:2015}, the topmost {\kegname} used for this example
  is sparser than the middle \kegname, and the graphon generating the bottom {\kegname} is compactly supported and thus corresponds to a dense graph. 
}
\end{figure}

The first main estimation result is that $\hat{W}_{(G_k, s_k)} \convEstGP W$ in probability as $k \to \infty$.
That is, for every infinite sequence $N \subseteq \Nats$ there is a further
infinite subsequence $N' \subseteq N$ such that $\hat{W}_{(G_k, s_k)} \convEstGP W$ almost surely along $N'$.
Subject to an additional technical constraint (implied by integrability of $W$) the convergence in probability
may be replaced by convergence almost surely.
Note that consistency holds for observations generated by an arbitrary graphex $\W = (I,S,W)$,
not just those with the form $\W=(0,0,W)$; see \cref{fig:triple_proc}.

\begin{figure}
 \includegraphics[width=0.99\linewidth]{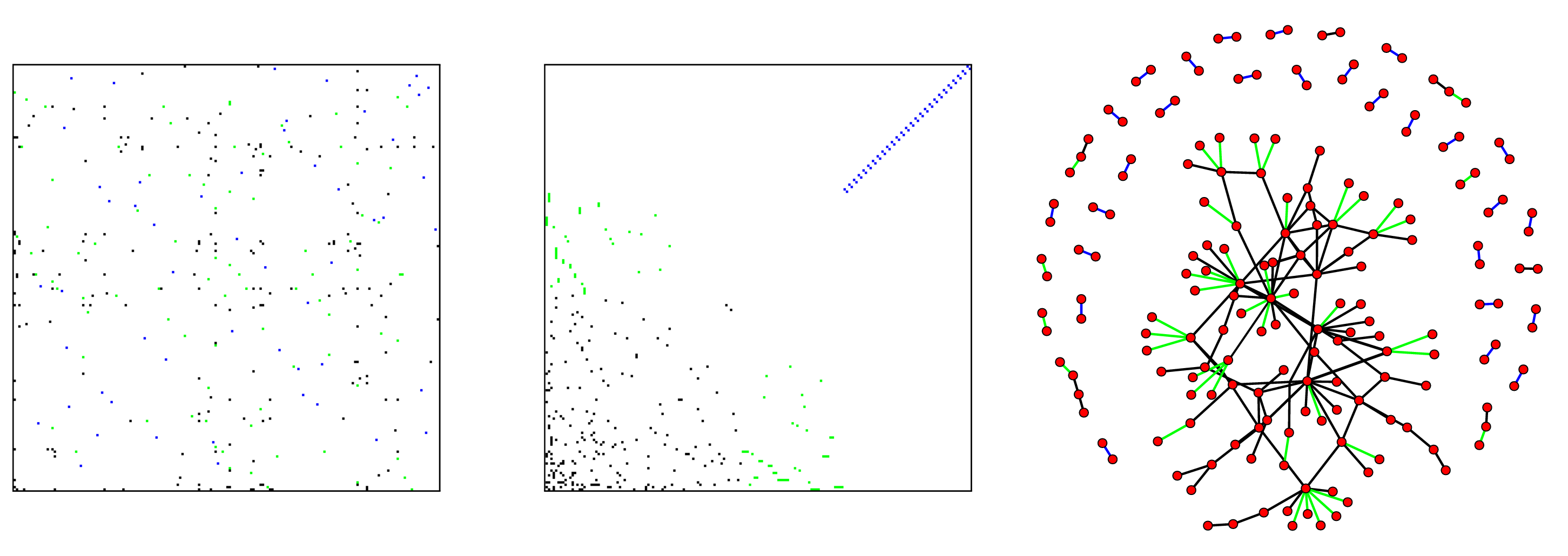}
\caption{\label{fig:triple_proc}
Realization of unlabeled {\kegname} generated by $\W=(I,S,W)$ at size $s=15$ (right panel), and associated dilated empirical graphon (left and center panels).
The generating graphex is $W = (x+1)^{-2}(y+1)^{-2}$, $S = 1/2 \exp(-(x+1))$, and $I = 0.1$.
The observation size is $s=15$.
The dilated empirical graphex is pictured as two equivalent representations $\hat{W}_{(G,15)}$ and $\hat{W}'_{(G,15)}$, each with support $[0,12)^2$ ($180$ vertices at size $15$).
Edges from the $W$ component are shown in black, edges from the $S$ component are shown in green, and
edges from the $I$ component are shown in blue.
Recall that the ordering of the dilated empirical graphon is arbitrary, so the left and center panels depict 
different representations of the same estimator.
The leftmost panel shows the dilated empirical graphon with a random ordering.
The middle panel shows the dilated empirical graphon sorted to group the $I$, $S$, and $W$ edges,
with the $W$ edges sorted as in \cref{dilated_emp_graphon}.
The middle panel gives some intuition for why the dilated empirical graphon is able to estimate the entire graphex triple:
When a {\kegname} is generated according to $\hat{W}_{(G,15)}$ with latent Poisson process $\PP$,
the disjoint structure of the dilated graphon regions due to the $I$, $S$, and $W$ components induces a natural
partitioning of $\PP$ into independent Poisson processes that reproduce the independence structure used
in the full generative model \cref{KEGgen}.
}
\end{figure}

We now turn to the setting where the observation sizes $s_1, s_2, \dots$ are not included as part of the observations.
In this case, we study two natural models for the dataset. The first is to treat the observed
graphs $G_k$ as realizations of $\CAM{\KEG_{s_k}}$ for some (unknown) sequence $s_k \upto \infty$ as $k \to \infty$
that is independent of $\KEG$.
Another natural model is to take $G_1, G_2, \dots$ to be the sequence of all distinct graph structures taken on by $(\KEG_s)_{s\in\NNReals}$;
in this case, for all $k$, we take $G_k = \CAM{\KEG_{\tau_k}}$, where $\tau_k$ is the latent size at the $k$th occasion that the graph structure changes.
In this later case, we call $\GS{\KEG} = (\CAM{\KEG_{\tau_1}},\CAM{\KEG_{\tau_2}}, \dots )$ the graph sequence of $\KEG$. (We define the graph sequence formally in \cref{prelim}.)

Intuitively, $\GS{\KEG}$ is the {\kegname} $\KEG$ with the size information stripped away.
In this sense, the graph sequence of $\KEG$ is the random object naturally associated to $\W$ when the sizes are unobserved.
Thus, in this setting, 
convergence in distribution of the graph sequences induced by the estimators is a natural notion of consistency.
\begin{defn}
Write $\W_k \convEstGS \W$ as $k \to \infty$ when $\GS{\KEG^k} \convDist \GS{\KEG}$ as $k \to \infty$,
for $\KEG^k$ generated by $\W_k$ and $\KEG$ generated by $\W$.
\end{defn}
The notion of consistent estimation corresponding to this convergence is that, 
for any fixed $\ell \in \Nats$, the distribution of the length $\ell$ prefix of the graph sequence generated by the estimator 
should be close to the distribution of the length $\ell$ prefix of the graph sequence generated by $\W$.
Convergence in distribution of every finite-size prefix is equivalent to convergence in distribution of
the entire sequence. 

To explain our estimator for this setting, we will need the following concept:
\begin{defn}
Let $c \in \NNReals$ and let $\W = (I,S,W)$ be a graphex. A \defnphrase{$c$-dilation of $\W$} is
the graphex  $\W^c =(c^2I , c S (\cdot / c) , W (\cdot / c, \cdot / c))$.
\end{defn}
The key fact about $c$-dilations is that $\GPD{\W}{s} = \GPD{\W^c}{s/c}$ for all $s \in \NNReals$,
and thus also $\GS{\KEG} \equaldist \GS{\KEG^c}$ whenever $\KEG$ is generated by $\W$ and $\KEG^c$ is generated by $\W^c$.
That is, the law of the graph sequence is invariant to dilations of the generating graphex.
This means, in particular, that the dilation of a graphex is not an identifiable parameter
when the observation sizes are not included as part of the observation. 
The obvious guess for the estimator in this setting is then the estimator for the known-sizes setting
with the dilation information stripped away.
That is, our estimator is the dilated empirical graphon modulo dilation;
i.e., it is simply the empirical graphon $\empGraphon_{G_k} : [0,1]^2 \to [0,1]$ defined above.
In this setting, the empirical graphon is acting as a representative of its equivalence class under 
the relation that equates graphons that generate graph sequences with the same laws.

The main estimation result is that if either
\begin{enumerate}
\item There is some (possibly random) sequence $(s_k)$, independent from $\KEG$, such that
$s_k \upto \infty$ a.s.\ and $G_k = \CAM{\KEG_{s_k}}$ for all $k \in \Nats$, or
\item $(G_1, G_2, \dots) = \GS{\KEG}$,
\end{enumerate}
then $\empGraphon_{G_k} \convEstGS \W$ in probability as $k \to \infty$.
Subject to an additional technical constraint (implied by integrability), 
the convergence in probability may be strengthened to convergence almost surely.

Our estimation results are inspired by Kallenberg's development of the theory
of estimation for exchangeable arrays \citep{Kallenberg:1999}.
Restricted to the graph setting (that is, $2$-dimensional arrays interpreted as adjacency matrices), 
and translated into modern language,
that paper introduced the empirical graphon (although not named as such) and formalized consistency in terms of the weak topology: $W_k \to W$ as $k \to \infty$ when
the graphs generated by $W_k$ converge in distribution to the graphs generated by $W$.
The estimation results of the present paper may be seen as generalizations of \citep{Kallenberg:1999} to the sparse graph regime. 

The present paper is also closely related to the recent paper \citep{Borgs:Chayes:Cohn:Holden:2016}.
Specialized to the case $\bvtheta = \NNReals$ equipped with Lebesgue measure, that paper extends the 
cut distance between compactly supported graphons---a core tool in the limit theory of dense graphs---to arbitrary integrable graphons.
Convergence in the cut distance then gives a notion of limit for sequences of graphons.
This is extended to a notion of convergence for sequences of (sparse) graphs by
saying that a sequence $G_1, G_2, \dots$ converges in the stretched cut distance sense
if and only if $\hat{W}_{(G_1, \sqrt{\edges(G_1})}, \hat{W}_{(G_2, \sqrt{\edges(G_2})}, \dots$ converges with respect to the cut distance.
That is, each graph $G_k$ is mapped to the empirical graphon dilated by $\vertices(G_k)/\sqrt{\edges(G_k)}$. 
The same paper also establishes that $\edges(G_k)/s_k^2 \to \|W\|_1 \as$.
Thus, in the $\|W\|_1 = 1$ case, these dilated empirical graphons, considered as pixel pictures, will look asymptotically identical
to the $\vertices(G_k)/s_k$-dilated empirical graphons that we use as estimators in the known sizes case. 
This suggests that a close connection between consistent estimation and convergence in the cut distance.
Indeed, in the dense graph setting these notions of convergence are known to be equivalent 
(in the dense setting, the convergence $W_k \convEstGP W$ as $k \to \infty$ is equivalent to left convergence \citep{Diaconis:Janson:2007},
and left convergence is equivalence to convergence in the cut norm \citep{Borgs:Chayes:Lovasz:Sos:Vesztergombi:2006}).
An analogous result in the sparse graph setting would allow for a very different approach to proving our
convergence result in the known size setting, restricted to the special case that the generating graphex is an integrable graphon.

The paper is organized as follows:
In \cref{prelim} we give formal definitions for the basic tools of the paper.
The sampling result is derived in \cref{sampling}.
In \cref{est_w_sizes} we prove the estimation result for the setting where observation sizes are included as part of the observation.
We build on this in \cref{est_wo_sizes} to prove the estimation result for the setting where the true underlying observation sizes are not observed.

\newcommand{\NGraphs}{\mathcal G_{\Nats}}
\newcommand{\AM}[1]{G(#1)}

\newcommand{\IC}[1]{[\![#1]\!]}

\section{Preliminaries}
\label{prelim}
The basic object of interest in this paper is point processes on $\NNReals^2$,
interpreted as the edge sets of random graphs with vertices labeled in $\NNReals$.
\begin{defn}
  An \defnphrase{adjacency measure} is a purely atomic, symmetric, simple, locally finite measure on $\NNReals^2$.
\end{defn}
If $\xi = \sum_{i,j}\delta_{(\theta_i,\theta_j)}$ is an adjacency measure then the associated graph with labels in $\NNReals$  
is one with edge set $\{(\theta_i,\theta_j)\}$, where $\theta_i \le \theta_j$; the vertex set is deduced from the edge set.

The defining property of {\kegnames} is that, intuitively speaking,
the labels of the vertices of the graph are uninformative about the graph structure.
This is formalized by requiring that the associated adjacency measure is jointly exchangeable, where
\begin{defn}
  A random measure $\xi$ on $\NNReals^2$ is \defnphrase{jointly exchangeable} if $\xi \circ (\phi \otimes \phi) \equaldist \xi$
  for any measure-preserving transformation $\phi:\NNReals \to \NNReals$.
\end{defn}    

A representation theorem for jointly exchangeable random measures on $\NNReals^2$ was given by Kallenberg \citep{Kallenberg:2005, Kallenberg_Random_Meas_Plane}. 
This result was translated to the setting of random graphs in \citep{Veitch:Roy:2015}.
Writing $\Lebesgue$ for Lebesgue measure and $\mu_W(\cdot) = \int_{\NNReals} W(x,\cdot) \intd x$,
the defining object of the representation theorem is:
\begin{defn}
  A \defnphrase{graphex} is a triple $(\IsoF,\StarF,W)$, where $\IsoF \ge 0$ is a non-negative real,
  $\StarF : \NNReals \to \NNReals$ is integrable,
  and the \defnphrase{graphon} $W : \NNReals^2 \to [0,1]$ is symmetric, and satisfies
  \begin{enumerate}
  \item $\Lebesgue\{ \mu_W = \infty \} = 0$ and $\Lebesgue \{\mu_W > 1\} < \infty$, 
  \item $\Lebesgue^2[W; \mu_W \vee \mu_W \le 1] = \int_{\NNReals^2} W(x,y)\,1[ \mu_W(x) \le 1 ]\, 1[\mu_W(y) \le 1] \intd x \intd y < \infty$,
  \item $\int_{\NNReals} W(x,x) \, \intd x < \infty$.
  \end{enumerate}
  We say that a graphex is non-trivial if $I + \|S\|_1 + \|W\|_1 > 0$, i.e. if it is not the case that the graphex is $0$ a.e.
\end{defn}
The representation theorem is:
\begin{thm}\label{thm:graphex_rep_theorem}
  Let $\xi$ be a random adjacency measure.
  $\xi$ is jointly exchangeable \iff there exists a (possibly random) graphex $\W=(\IsoF,\StarF,W)$ such that, almost surely, 
  \begin{align}
  \begin{split}\label{KEGgen}
    \xi = 
    & \hspace*{3.6mm} 
            \sum_{i,j}1[W(\vartheta_{i},\vartheta_{j}) \le \zeta_{\{i,j\}}]\delta_{\theta_{i},\theta_{j}}\\
    & +  \sum_{j,k} 1[ \chi_{jk} \le \StarF(\vartheta_{j}) ](\delta_{\theta_{j},\sigma_{jk}}+\delta_{\sigma_{jk},\theta_{j}})\\
    & +   \sum_{k} 1[\eta_{k} \le \IsoF ](\delta_{\rho_{k},\rho'_{k}}+\delta_{\rho'_{k},\rho_{k}}),
  \end{split}
  \end{align}
  for some collection of independent uniformly distributed random variables $(\zeta_{\{i,j\}})$ in $[0,1]$;
  some independent unit-rate Poisson processes $\{ (\theta_{j},\vartheta_{j})\} $ and
  $\{ (\sigma_{ij},\chi_{ij})\} _{j}$, for $i\in\Nats$, on $\NNReals^{2}$
  and $\{ (\rho_{j},\rho_{j}^{\prime},\eta_{j})\} $ on $\NNReals^{3}$.
\end{thm}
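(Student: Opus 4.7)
The plan is to deduce the theorem from Kallenberg's representation theorem for jointly exchangeable random measures on $\NNReals^2$ by exploiting the additional structural constraints that characterize adjacency measures, namely pure atomicity, simplicity, symmetry, and local finiteness.

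For the easier ``if'' direction, I would show that each of the three summands on the right-hand side of \eqref{KEGgen} is individually jointly exchangeable, so that their sum is as well. For the graphon piece, the key fact is that the Poisson process $\{(\theta_i,\vartheta_i)\}$ on $\NNReals^2$ is invariant in distribution under any map of the form $(\theta,\vartheta) \mapsto (\phi(\theta),\vartheta)$ whenever $\phi$ is measure-preserving, because its intensity $\Lebesgue\otimes\Lebesgue$ is preserved. Since the randomization variables $\zeta_{\{i,j\}}$ are independent of the Poisson atoms and indexed by the (unordered) atom labels rather than the locations, conditioning on the Poisson points shows that applying $\phi\otimes\phi$ to the graphon term yields an equal-in-law measure. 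An analogous argument using the Poisson processes $\{(\sigma_{ij},\chi_{ij})\}$ and $\{(\rho_k,\rho_k',\eta_k)\}$ handles the star and isolated-edge components. Summing over the three independent pieces then gives joint exchangeability of $\xi$.

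For the substantive ``only if'' direction, I would invoke Kallenberg's representation theorem for jointly exchangeable random measures on $\NNReals^2$ (\citep{Kallenberg_Random_Meas_Plane}, see also \citep{Kallenberg:2005}), which represents any such $\xi$ as a countable sum whose terms are built from independent Poisson processes together with continuous Lebesgue-density components governed by a (random) measurable family of functions. The second step is to use the defining properties of an adjacency measure to collapse that general representation down to \eqref{KEGgen}. Pure atomicity eliminates all absolutely continuous (Lebesgue-density) components; simplicity forces each pair $(\theta,\theta')$ to carry at most a unit atom, which on Kallenberg's side turns the graphon integrand into a thresholding indicator $1[\zeta_{\{i,j\}} \le W(\vartheta_i,\vartheta_j)]$ rather than a general nonnegative function; symmetry pairs up atoms into the symmetric deltas appearing in the star and isolated-edge sums; and local finiteness translates into the integrability conditions on $(I,S,W)$. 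After these reductions, the only surviving terms are precisely those of \eqref{KEGgen}, and the parameters $I$, $S$, $W$ are read off as the corresponding intensity data (possibly randomized by the directing $\sigma$-algebra of the exchangeable measure, which is why $\W$ may be random).

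The main obstacle is the bookkeeping in the second step: Kallenberg's general representation contains many terms --- continuous-density components, mixed terms involving one continuous and one atomic coordinate, diagonal components, and several symmetrized Poisson components --- and each must be ruled out or matched to one of $I$, $S$, $W$ under the adjacency constraints. In particular, verifying that the thresholded form $1[W(\vartheta_i,\vartheta_j) \le \zeta_{\{i,j\}}]$ is forced (rather than a general function of $(\vartheta_i,\vartheta_j)$) requires that atoms be unit-valued, and deriving the specific integrability conditions $\Lebesgue\{\mu_W=\infty\}=0$, $\Lebesgue\{\mu_W>1\}<\infty$, $\Lebesgue^2[W;\mu_W\vee\mu_W\le 1]<\infty$, and $\int W(x,x)\,\intd x < \infty$ from local finiteness of $\xi$ on compacts $[0,s]^2$ is a delicate calculation that splits $W$ according to whether a given vertex has finite or infinite expected degree, and then applies Campbell's formula to the thinned Poisson process of atoms with degree above/below each threshold.
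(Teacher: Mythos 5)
There is nothing in this paper to compare your argument against: \cref{thm:graphex_rep_theorem} is not proved here at all, but quoted from \citep{Veitch:Roy:2015,Borgs:Chayes:Cohn:Holden:2016}, which in turn build on Kallenberg's representation theorem for jointly exchangeable random measures on $\NNReals^2$ \citep{Kallenberg_Random_Meas_Plane,Kallenberg:2005}. Your proposal essentially reconstructs the route taken in those cited works: the ``if'' direction by checking that each of the three components of \cref{KEGgen} is separately invariant in law under $\phi\otimes\phi$ for measure-preserving $\phi$ (conditioning on a possibly random graphex), and the ``only if'' direction by specializing Kallenberg's general representation and using pure atomicity to kill the Lebesgue-density components, simplicity to force unit atoms and hence a thresholded $\{0,1\}$-valued graphon term, symmetry to pair atoms, and local finiteness to yield the integrability conditions on $(I,S,W)$. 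So as an identification of the correct approach, your sketch is sound and consistent with the source of the theorem; note also that your thresholding convention $\zeta_{\{i,j\}}\le W(\vartheta_i,\vartheta_j)$ matches the edge-probability-$W$ construction described in the introduction (the inequality as printed in the theorem is evidently reversed).

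That said, be clear about what your proposal does and does not establish. Everything that makes the ``only if'' direction a theorem rather than a plausibility argument is concentrated in the step you defer: enumerating all the terms of Kallenberg's representation (drift, diagonal, line/column components, the several symmetrized Poisson pieces) and showing each is either excluded by the adjacency-measure constraints or absorbed into $I$, $S$, or $W$; replacing the general integrand $f(\alpha,\vartheta_i,\vartheta_j,\zeta_{\{i,j\}})$ by an indicator via a re-randomization argument once simplicity forces $\{0,1\}$ values; and deriving the precise conditions $\Lebesgue\{\mu_W=\infty\}=0$, $\Lebesgue\{\mu_W>1\}<\infty$, $\Lebesgue^2[W;\mu_W\vee\mu_W\le 1]<\infty$, $\int W(x,x)\,\intd x<\infty$ from local finiteness (and conversely checking that these suffice for \cref{KEGgen} to define a locally finite simple measure, which the ``if'' direction also needs). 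As written, your text names these obstacles but does not carry them out, so it is a correct plan matching the cited proof rather than a complete independent proof.
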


\begin{defn}
  A \defnphrase{\kegname} associated with
  graphex $(\IsoF,\StarF,W)$ is the random adjacency measure $\KEG$ of the form given in \cref{KEGgen}.
  The {\kegname} model is the family $(\KEG_s)_{s\in\NNReals}$, where $\KEG_s(\cdot) = \KEG( \cdot \, \cap [0,s]^2)$.
\end{defn}
\begin{remark}
In \citep{Veitch:Roy:2015} the Kallenberg exchangeable graph was defined as the random graph with vertex labels in $\NNReals$ associated with $\KEG$.
The definition of the {\kegname} differs slightly, motivated by the use of techniques from the theory of distributional convergence of point processes, which makes explicit appeal to the point process structure desirable.
It will sometimes be useful in exposition to conflate the {\kegname} with the associated labeled graph, so statements such as ``the number of edges of $\KEG_s$'' are sensible.    
\end{remark} 

We will often have occasion to refer to the unlabeled finite graph associated with a finite adjacency measure.  
\begin{defn}
Let $\xi$ be a finite adjacency measure. 
The \defnphrase{unlabelled graph associated with $\xi$} is $\CAM{\xi}$.
\end{defn}

A particularly important case is the graph associated to the size-$s$ {\kegname} $\KEG_s$, which is almost surely finite.
We will have frequent occasion to refer to the distributions of both the labeled and unlabeled graphs:
\begin{defn}
Let $(\KEG_s)_{s\in\NNReals}$ be a graphex process generated by $\W$.
The \defnphrase{finite {\kegname} distribution} with parameters $\W$ and $s$ is $\KEGD{\W}{s} = \Pr(\KEG_s \in \cdot \given \W,s)$,
and $\KEGDinf{\W} = \KEGD{\W}{\infty}$.
The \defnphrase{finite unlabeled {\kegname} distribution} with parameters $\W$ and $s$ is $\GPD{\W}{s} = \Pr(\CAM{\KEG_s} \in \cdot \given \W,s)$.
\end{defn}

In order to pass from $\CAM{\xi}$ back to some adjacency measure $\xi'$ such that $\CAM{\xi'} = \CAM{\xi}$, we must reintroduce labels.
A simple scheme is to produce labels independently and uniformly in some range:
\newcommand{\RLabel}[2][]{\mathsf{Lbl}_{#1}(#2)}
\newcommand{\REmbed}[1]{\mathsf{embed}(#1)}
\begin{defn}
Let $G$ be an unlabeled graph with edge set $E$, and let $s > 0$.  
A \defnphrase{random labeling of $G$ into $[0,s]$}, $\RLabel[s]{G,\{U_i\}}$,
is a random adjacency measure $\RLabel[s]{G,\{U_i\}} = \sum_{(i,j) \in E} \delta_{(U_i,U_j)}$, 
where $U_i \distiid \uniDist[0,s]$, for $i \in \Nats$.
Where there is no risk of confusion, we will write $\RLabel[s]{G}$ for $\RLabel[s]{G, \{U_i\}}$ where $U_i \distiid \uniDist[0,t]$, for $i \in \Nats$,
independently of everything else. 
\end{defn}
Because our notion of consistent estimation is a requirement of distributional convergence, the distributions of these random labelings will play a large role.
Clearly, the distribution of $\RLabel[s]{G}$ is a measurable function of $G$ and $s$.
\begin{defn}
We write $\REmbed {G,s}(\cdot) = \Pr(\RLabel[s]{G} \in \cdot)$ for the distribution of $\RLabel[s]{G}$.
When $G$ is itself random, a random embedding of $G$ into $[0,s]$ is defined by $\REmbed {G,s} = \Pr [\RLabel[s]{G} | G]$.  
\end{defn}

We typically think of {\kegnames} as defining a nested collection of $\NNReals$-labeled 
graph valued random variables $(\KEG_s)_{s\in\NNReals}$. 
In modeling situations where the labeling is irrelevant, it is natural to instead look at the (countable) collection
of all distinct graph structures taken on by $(\KEG_s)_{s\in\NNReals}$; this is the graph sequence associated with $\KEG$.
We now turn to formally defining the graph sequence associated with an arbitrary adjacency measure $\xi$.
To that end, define $E : \NNReals \to \Nats$ by
\[
E(s) = \frac 1 2 \xi [0,s]^2 \qquad \text{for $s \in \NNReals$.}
\]
In the absence of self loops, $E(s)$ is the number of edges present
between vertices with labels in $[0,s]$.  
In general, the jumps of $E$ correspond with the appearance of edges.
\begin{defn}
Let $\xi$ be an adjacency measure.
The jump times of $\xi$, written as $\tau(\xi)$, is the sequence $\tau_1, \tau_2, \dotsc$ of jumps of $E$ in order of appearance.
\end{defn}
Note that the map $\xi \mapsto \tau(\xi)$ is measureable.
Intuitively, $\tau_1, \tau_2, \dots$ are the sample sizes at which 
edges are added to the unlabeled graph associated with the adjacency measure.

Let $\chi_s$ denote the operation of restricting an adjacency measure to those vertices with labels in $[0,s]$,
in the sense that $\chi_s\xi(\cdot) = \xi( \cdot \, \cap [0,s]^2)$.
We now formalize the sequence of all distinct unlabeled graphs associated with $(\chi_s\xi)_{s\in\NNReals}$:
\begin{defn}
The \defnphrase{graph sequence associated with $\xi$},
written $\GS {\xi}$, 
is the sequence $\CAM{\chi_{\tau_1}\xi}, \CAM{\chi_{\tau_2}\xi},\dotsc$,
where $\tau_1, \tau_2, \dotsc$ are the jump times of $\xi$.
\end{defn}

\section{Sampling}\label{sampling}
$\KEG_r$, a {\kegname} of size $r$,
may be generated from $\KEG_s$, a {\kegname} of size $s>r$, by restricting $\KEG_s$ to $[0,r]^2$.
In this section we show that this restriction has a natural relation to $p$-sampling:
$\CAM{\KEG_r}$ may be generated as an $r/s$-sampling of $\CAM{\KEG_s}$.

The first result we need is that 
random labelings preserve the law of exchangeable adjacency measures.
Intuitively, the labels 
of the size-$s$ {\kegname} can be invented by labeling each vertex \iid $\uniDist[0,s]$.

\begin{lem}\label{invrelabeling}
  Let $s > 0$ and let $\KEG_s$ be a size-$s$ {\kegname} generated by $\W$.
  Then, $\KEGD{\W}{s} = \EE[\REmbed{\CAM{\KEG_s},s}]$.
\end{lem}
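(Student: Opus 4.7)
The plan is to prove the stronger pointwise statement that the conditional law of $\KEG_s$ given $\CAM{\KEG_s}$ is $\REmbed{\CAM{\KEG_s}, s}$; integrating over $\CAM{\KEG_s}$ then yields the lemma. Explicitly, given $\CAM{\KEG_s} = G$ with $n$ vertices, I aim to show that the vertex labels appearing in $\KEG_s$ are i.i.d.\ $\uniDist[0,s]$, one per vertex of $G$, with edges of $\KEG_s$ placed (symmetrically) at the pairs of labels corresponding under some bijection to the edges of $G$. Matching this to the definition of $\RLabel[s]{G, \{U_i\}}$, which places the edge $(i,j) \in E(G)$ at $(U_i, U_j)$ with $U_i \distiid \uniDist[0,s]$, finishes the proof.

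The key input is the explicit Poissonian representation \cref{KEGgen}. Every vertex label of $\KEG_s$ arises as the first coordinate of a point from one of the independent Poisson processes $\{(\theta_i,\vartheta_i)\}$ on $\NNReals \times \bvtheta$, $\{(\sigma_{ij},\chi_{ij})\}_j$ on $\NNReals^2$ for each $i$, or $\{(\rho_j,\rho_j',\eta_j)\}$ on $\NNReals^3$; in each case the intensity is a product measure whose first factor is $\Lebesgue$. Let $\mathcal M$ be the $\sigma$-algebra generated by the non-positional data: the $\zeta$'s, the second-coordinate (``mark'') values $\vartheta_i, \chi_{ij}, \eta_j$ together with their indexing, and the Borel indicator data for which indices satisfy the in-range conditions $\theta_i \le s$, $\sigma_{ij}\le s$, $\rho_k, \rho'_k \le s$. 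By the standard marking characterization of a Poisson process with product intensity---conditional on the projection to the second factor, the first coordinates of in-range points are i.i.d.\ uniform on the first factor---it follows that given $\mathcal M$ the first coordinates of all in-range points are mutually independent and each distributed as $\uniDist[0,s]$. The set of active (edge-contributing) vertices and the graph structure among them are deterministic functions of $\mathcal M$; hence $\CAM{\KEG_s}$ is $\mathcal M$-measurable, and by the tower property the vertex labels of $\KEG_s$ are conditionally i.i.d.\ $\uniDist[0,s]$ given $\CAM{\KEG_s}$, as required.

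The main obstacle is the bookkeeping needed to make the Poisson conditioning argument rigorous: when $\nu(\bvtheta)=\infty$ the process $\{(\theta_i,\vartheta_i)\}$ has infinite total intensity, and the star processes likewise have infinitely many points a.s., so one cannot naively ``enumerate and relabel''. This is handled by restricting attention to in-range points (those whose first coordinate lies in $[0,s]$) and appealing to the graphex integrability conditions of \cref{prelim}, which ensure that the number of active vertices arising from each component is a.s.\ finite. Poisson conditioning on the bounded regions $[0,s]$ and $[0,s]^2$ then applies directly, and the i.i.d.\ uniform structure of the active labels passes cleanly through the restriction.
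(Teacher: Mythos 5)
Your proposal is correct and rests on exactly the same idea as the paper's proof: after restricting the latent Poisson processes to points with first coordinate in $[0,s]$, those first coordinates are i.i.d.\ $\uniDist[0,s]$ and independent of the marks, the $\zeta$'s, and the other latent data, so the labels carry no information beyond the unlabeled graph. The paper packages this as a resampling identity (replace the first coordinates of $\PP_s$ by fresh i.i.d.\ uniforms and note $\PP'_s \equaldist \PP_s$), while you package it as a conditional (marking) statement and integrate; the two are equivalent, and your version merely spells out the star and isolated-edge components where the paper appeals to ``an essentially identical argument.''
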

\begin{proof}
  It suffices to show that $\RLabel[s]{\CAM{\KEG_s}} \equaldist \KEG_s$.

  Suppose $\KEG_s$ is generated as in \cref{KEGgen}.
  For simplicity of exposition, suppose that the generating graphex is $(0,0,W)$,
  and the associated latent Poisson process is $\PP_s$. 
  Let $\{\theta'_i\}_{i\in\Nats} \distiid \uniDist[0,s]$, and let
  $\PP'_s = \{(\theta'_i,\vartheta_i) \st (\theta_i,\vartheta_i) \in \PP_s \}$.
  By a property of the Poisson process, $\PP'_s \equaldist \PP_s$.
  Let $\KEG'_s$ be a size-$s$ {\kegname} generated using the same latent variables as $\KEG_s$,
  but with $\PP'_s$ replacing $\PP_s$.
  Then, by construction, $\KEG'_s \equaldist \RLabel[s]{\CAM{\KEG_s}}$.
  Moreover, $\KEG'_s$ is distributed as a size-$s$ {\kegname}, so $\KEG'_s \equaldist \KEG_s$.

  An essentially identical argument proves the result for a {\kegname} generated by the full graphex.
\end{proof}

The main sampling result is:

\begin{thm}\label{invsampling}
  Let $\W$ be a graphex,
  let $s > 0$ and $r \in [0,s]$, 
  let $G_s \dist \GPD{\W}{s}$,
  and let $G_r$ be an $r/s$-sampling of $G_s$.
  Then, $G_r \dist \GPD{\W}{s}$.
\end{thm}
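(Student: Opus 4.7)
The plan is to reduce the statement to Lemma \ref{invrelabeling} by identifying restriction of a random labeling with $p$-sampling. The key idea is that labeling the vertices of $G_s$ uniformly at random in $[0,s]$ makes each vertex independently land in the subinterval $[0,r]$ with probability $r/s$, and restricting the resulting adjacency measure to $[0,r]^2$ automatically discards any vertex whose label falls outside $[0,r]$ as well as any vertex that, although labeled within $[0,r]$, has no neighbor also labeled within $[0,r]$. Since the adjacency measure only records edges, this matches exactly the definition of $p$-sampling (vertices that end up isolated in the induced subgraph vanish).

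First I would unpack both sides in terms of adjacency measures. By definition of $\GPD{\W}{r}$ and the projective property $\KEG_r = \chi_r \KEG_s$, it suffices to show that $\CAM{\chi_r \KEG_s}$ has the same distribution as the $r/s$-sampling of $G_s \sim \GPD{\W}{s}$. Applying Lemma \ref{invrelabeling} to $\KEG_s$, I may replace $\KEG_s$ in distribution by $\RLabel[s]{G_s, \{U_i\}}$, with $\{U_i\} \distiid \uniDist[0,s]$ independent of $G_s$. Thus the left-hand side becomes $\CAM{\chi_r \RLabel[s]{G_s, \{U_i\}}}$.

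Next I would analyze $\chi_r \RLabel[s]{G_s, \{U_i\}}$ conditional on $G_s$. Let $B_i = \mathbf{1}[U_i \le r]$, so the $B_i$ are i.i.d.\ $\bernDist(r/s)$ and independent of $G_s$. A pair $(U_i, U_j)$ lies in $[0,r]^2$ iff $B_i = B_j = 1$, and conditional on $\{B_i = 1\}$ the label $U_i$ is uniform on $[0,r]$, jointly independent. Hence $\chi_r \RLabel[s]{G_s, \{U_i\}}$ is the random labeling into $[0,r]$ of the edge-set of the subgraph of $G_s$ induced by $\{i : B_i = 1\}$. Passing to unlabeled graphs (which forgets the uniform labels), $\CAM{\chi_r \RLabel[s]{G_s, \{U_i\}}}$ equals the unlabeled edge-induced subgraph on the sampled vertices, which is precisely the $r/s$-sampling of $G_s$.

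I expect no serious obstacle; the main care required is the bookkeeping around what $\CAM{\cdot}$ forgets. In particular, one must verify that the unlabeled graph associated with the restricted adjacency measure retains exactly the vertices of $G_s$ that were both retained (i.e., $B_i = 1$) and incident to at least one edge among the retained vertices, matching the definition of $p$-sampling which returns only the edge set of the vertex-induced subgraph. This is immediate from the fact that an adjacency measure is determined by its edges and that isolated points contribute no mass, so the step is routine once the identification in the previous paragraph is in hand.
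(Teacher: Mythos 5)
Your proposal is correct and follows essentially the same route as the paper's proof: apply Lemma \ref{invrelabeling} to identify the random labeling of $G_s$ into $[0,s]$ with a size-$s$ graphex process, restrict to $[0,r]^2$ so the unlabeled restriction is distributed as $\GPD{\W}{r}$ by projectivity, and observe that each label falls in $[0,r]$ independently with probability $r/s$, so the unlabeled restriction is an $r/s$-sampling of $G_s$. Your extra bookkeeping (the Bernoulli indicators and the remark that isolated vertices are discarded by the adjacency-measure formalism) just spells out what the paper leaves implicit.
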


\begin{proof}
  Let $\xi_s = \RLabel[s]{G_s}$. It is an obvious consequence of \cref{invrelabeling} that
  $\xi_s$ is equal in distribution to a size-$s$ {\kegname} generated by $\W$. Let $\xi_r$ be 
  the restriction of $\xi_s$ to $[0,r]^2$, so $\CAM{\xi_r} \dist \GPD{\W}{r}$.
  Each vertex of $\xi_s$ has a label in $[0,r]$ independently with probability $r/s$;
  thus, $\CAM{\xi_r} \equaldist G_r$.
\end{proof}

\section{Estimation with known sizes}\label{est_w_sizes}
This section explains our estimation results for the case where
the observations are $(G_1, s_1), (G_2, s_2), \dots$, where
$G_k = \CAM{\KEG_{s_k}}$ for some {\kegname} $\KEG$ generated according to a graphex $\W$
and some sequence $s_k \upto \infty$ in $\NNReals$.
We consider both the case of an arbitrary non-random divergent sequence and
the case where the sizes are taken to be the jumps of the {\kegname} (that is, the sizes
at which new edges enter the graph), in which case we denote the sequence as $\tau_1, \tau_2, \dots$
As motivated in the introduction, our notion of estimation is formalized as:
\begin{defn}
  Let $\W_1,\W_2,\dotsc$ be a sequence of graphexes.
  Write $\W_n \convEstGP \W$ as $n \to \infty$ when, for all $s\in\NNReals$, it holds that
  $\GPD{\W_n}{s} \to \GPD{\W}{s}$ weakly as $n \to \infty$.
\end{defn}
The goal of estimation is: given a sequence of observations $(G_1, s_1), (G_2, s_2),\dots$, 
produce 
\[
\hat{W}_{(G_k,s_k)}: \NNReals^2 \to [0,1]
\] 
such that $\hat{W}_{(G_k,s_k)} \convEstGP \W$ as $k \to \infty$, where the convergence
may be almost sure or merely in probability.

The main result of this section is that the dilated empirical graphons $\hat{W}_{(G_k,s_k)} \convEstGP \W$
for $(G_1, s_1), (G_2, s_2), \dots$ generated by a graphex $\W$; i.e. the dilated empirical graphon is a consistent estimator for $\W$.

We now turn to an intuitive description of the broad structure of the argument.
Conditional on $G_k$, let $\xi^k = \RLabel[s_k]{G_k}$ 
and let
$\REmbed{G_k, s_k}$ be the distribution of $\xi^{k}$ conditional on $G_k$.
The first convergence result, \cref{conv_of_emp_proc}, is that, almost surely,
the random distributions $\REmbed{G_k, s_k}$ converge weakly to $\law(\KEG) = \KEGDinf{\W}$.   
That is, for almost every realization of a {\kegname}, the point processes defined by
randomly labeling the observed finite graphs converge in distribution to the original {\kegname}. 
The analogous statement in the \iid sequence setting is that, given some
$(X_1, X_2, \dots)$ where $X_k \distiid P$,
and $\sigma_n$ a random permutation on $[1,\dots,n]$, 
the random distributions $\Pr(X_{\sigma_n(1)}, \dots, X_{\sigma_n(n)} \in \cdot \given X_1, \dots, X_n)$
converge weakly almost surely to $\Pr((X_1, X_2, \dots) \in \cdot)$ as $n \to \infty$.

The convergence in distribution of the point processes on $\NNReals^2$ is 
equivalent to convergence in distribution of the point processes restricted to $[0,r]^2$ for
every finite $r \in \NNReals$.
This perspective lends itself naturally to the interpretation of the limit result as a qualitative approximation theorem:
intuitively, $\Pr(\xi^k([0,r]^2 \cap \cdot) \in \cdot \given G_k)$ approximates $\KEGD{\W}{r}$, with the approximation becoming exact in the limit $r/s_k \to 0$.
This perspective also makes clear the first critical connection between estimation and sampling:
conditional on $G_k$, $\CAM{\xi^{k}([0,r]^2 \cap \cdot)}$ has the same distribution as an $r/s_k$-sampling of $G_k$. 

The second key observation is that, conditional on $G_k$,
a sample from $\GPD{\hat{W}_{(G_k, s_k)}}{r}$ 
may be generated by sampling $\poiDist(r/s_k \vertices(G_k))$ vertices with replacement from $G_k$
and returning the induced edge set.
The second step in the proof is to show that this sampling scheme is asymptotically
equivalent to $r/s_k$-sampling in the limit of $s_k \upto \infty$; this is the role
of \cref{w_wo_samp_equiv,poi_bin_equiv}.

\cref{estimation_w_size} then puts together these results to 
conclude that, almost surely, $\KEGDinf{\hat{W}_{(G_k, s_k)}} \to \KEGDinf{\W}$ weakly as $k \to \infty$.
Some additional technical rigmarole is required to show that this also gives convergence of the (unlabeled) random graphs.
This later convergence is the main result of this section, and is established in \cref{estimation_w_size_no_labels}.

\subsection{Convergence in Distribution of Random Embeddings}\label{conv_rembed}

This subsection uses results from the theory of 
distributional convergence of point processes to show that, almost surely,
$\REmbed{G_k, s_k} \to \GPD{\W}{\infty}$ weakly as $k \to \infty$. 

We will need the following definition and technical lemma:
A separating class for a locally compact second countable Hausdorff space $S$ is a class $\mathcal{U} \subset S$ such that
for any compact open sets with $K \subset G$ there is some $U \in \mathcal{U}$ with $K \subset U \subset G$.
\begin{lem}\label{sep_class_suffices}
Let $\phi, \phi_1, \phi_2,\dots$ be simple point processes on a locally compact second countable Hausdorff space $S$.
If 
\[
\phi_n(U) \convD \phi(U),\ n\to\infty
\]
weakly for all $U$ in some separating class for $S$ then
\[
\phi_n \convDist \phi,\ n\to\infty
\]
weakly.
\end{lem}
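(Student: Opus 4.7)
The plan is to reduce the statement to a classical characterization of weak convergence of simple point processes in terms of void probabilities on a separating class. Recall from the theory of random measures (see, e.g., Kallenberg, \emph{Foundations of Modern Probability}, Theorem 16.16, or \emph{Random Measures, Theory and Applications}, Theorem 4.11) that for simple point processes on a locally compact second countable Hausdorff space $S$, one has $\phi_n \convD \phi$ in the vague topology on locally finite measures if and only if the void probabilities converge, i.e.\ $\Pr(\phi_n(U) = 0) \to \Pr(\phi(U) = 0)$ for every $U$ in a separating class of $S$ (in the sense given just before the lemma).

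Given this, the deduction is almost immediate. By hypothesis, $\phi_n(U) \convD \phi(U)$ for every $U$ in the given separating class $\mathcal U$. Since the random variables $\phi_n(U)$ and $\phi(U)$ take values in $\Nats$, weak convergence is equivalent to pointwise convergence of the probability mass functions: every non-integer point is a continuity point of the CDF of $\phi(U)$, so $\Pr(\phi_n(U) \le k) \to \Pr(\phi(U) \le k)$ for every integer $k$, and in particular
\[
\Pr(\phi_n(U) = 0) \;\longrightarrow\; \Pr(\phi(U) = 0)
\]
for every $U \in \mathcal U$. Applying the cited characterization then yields $\phi_n \convD \phi$ in the vague topology, which is the desired conclusion.

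The only real work is in locating the appropriate form of Kallenberg's theorem to cite, since the lemma is essentially a repackaging of that classical characterization for the separating-class notion used in the paper. No additional tightness argument, integrability assumption, or joint-finite-dimensional convergence is needed: the void-probability characterization automatically supplies tightness, and the hypothesis as stated is already (strictly) stronger than the convergence of void probabilities required for its application.
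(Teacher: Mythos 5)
Your overall strategy—reduce to a classical Kallenberg criterion for simple point processes and verify it from the one-dimensional convergence of counts—is the same as the paper's, but the theorem you cite is misstated, and the misstatement is exactly where the content of the lemma lies. For simple point processes, convergence of the void probabilities $\Pr(\phi_n(U)=0)\to\Pr(\phi(U)=0)$ over a separating class is \emph{not} by itself sufficient for $\phi_n \convDist \phi$, even when every $\phi_n$ is simple, and it does not "automatically supply tightness." The criteria in Kallenberg (the Theorems 16.28 and 16.29 that the paper invokes) require, in addition to the void-probability condition, the non-clustering condition $\limsup_n \Pr(\phi_n(U)>1)\le\Pr(\phi(U)>1)$. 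A counterexample to your claimed equivalence on $S=\Reals$: let $\phi_0$ be a fixed simple point process, let $X \dist \uniDist[0,1]$, and let $\phi_n$ equal $\phi_0$ with probability $\tfrac12$ and otherwise consist of the $n$ distinct points $X+i/n^{2}$, $i=1,\dots,n$. Every $\phi_n$ is simple, and for each finite union of bounded intervals $U$ one checks $\Pr(\phi_n(U)=0)\to\tfrac12\Pr(\phi_0(U)=0)+\tfrac12\Pr(X\notin U)$, which is the void functional of the simple process equal to $\phi_0$ or $\delta_X$ with probability $\tfrac12$ each; yet $\phi_n([0,2])\ge n$ with probability $\tfrac12$, so $(\phi_n)$ is not tight and has no distributional limit. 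It is precisely the condition on $\Pr(\phi_n(U)>1)$ that rules out this kind of clustering, and it is not implied by void probabilities alone.

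The gap is easy to repair, and the repair is exactly the paper's proof: since $\phi_n(U)$ and $\phi(U)$ are $\Nats$-valued, the hypothesis $\phi_n(U)\convD\phi(U)$ yields not only $\Pr(\phi_n(U)=0)\to\Pr(\phi(U)=0)$ but also $\Pr(\phi_n(U)>1)\to\Pr(\phi(U)>1)$, so both conditions of the correct two-condition criterion hold on the separating class, and the conclusion follows. So your instinct that the lemma's hypothesis is stronger than what the classical theorem consumes is right, but as written your argument rests on a false characterization (and on the incorrect assertion that no tightness-type condition is needed); state and verify both conditions, citing the appropriate result, and the proof is complete.
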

\begin{proof}
By \citep[][Thms.~16.28 and 16.29]{Kallenberg:2001}, 
it suffices to
check that $\Pr(\phi_n(U) = 0) \to \Pr(\phi(U) = 0)$ and that $\limsup_n \Pr(\phi_n(U) > 1) \le \Pr(\phi(U) > 1)$.
Because $\phi_n(U)$ is a non-negative integer a.s., both conditions are implied by $\phi_n(U) \convD \phi(U)$.
\end{proof}

\begin{thm}\label{conv_of_emp_proc}
Let $\KEG$ be a {\kegname} generated by a non-trivial graphex $\W$, let $s_1,s_2,\dots$ be some sequence in $\NNReals$
such that $s_k \upto \infty$ as $k\to\infty$ and let $G_k = \CAM{\KEG_{s_k}}$ for all $k$.
Then $\REmbed{G_k, s_k} \to \KEGDinf{\W}$ weakly almost surely.
\end{thm}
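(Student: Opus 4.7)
The plan is to apply \cref{sep_class_suffices} pathwise. Fix a countable separating class $\mathcal{U}$ for $\NNReals^2$ such as products of half-open intervals with rational endpoints. To conclude that $\REmbed{G_k,s_k} \to \KEGDinf{\W}$ weakly almost surely, it suffices to show that, outside a single $\Pr$-null event, for every $U \in \mathcal{U}$ and every $m \in \Nats \cup \{0\}$ the conditional probability $\Pr(\xi^k(U) = m \given G_k)$ converges to $\Pr(\KEG(U) = m)$, where $\xi^k$ denotes a $\REmbed{G_k,s_k}$-distributed sample. Since $\mathcal{U} \times (\Nats \cup \{0\})$ is countable, it is enough to establish individual almost-sure convergence for each pair $(U,m)$ and then intersect the exceptional sets.

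\textbf{Identifying the target as a sampling statistic.} Fix $U = [0,r)^2$ with $r \in \Rationals_+$, and consider $k$ large enough that $s_k \ge r$. Under the random embedding of $G_k$ into $[0,s_k]$, each vertex is labeled independently and uniformly in $[0,s_k]$, so lands in $[0,r)$ with probability $p_k \defas r/s_k$. Hence $\Pr(\xi^k(U) = 2m \given G_k)$ equals the probability (over the Bernoulli$(p_k)$ choice of vertex subset $V \subseteq V(G_k)$) that the induced subgraph $G_k[V]$ has exactly $m$ edges. By \cref{invsampling}, this random quantity has unconditional expectation equal to $\Pr(\CAM{\KEG_r}\text{ has }m\text{ edges}) = \Pr(\KEG(U) = 2m)$, so the natural estimator is already correctly centered.

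\textbf{Upgrading to almost-sure convergence.} The crucial step is to promote this unbiasedness to almost-sure convergence of $Y_k(m) \defas \Pr(\xi^k(U) = 2m \given G_k)$. My plan is a variance-plus-Borel--Cantelli argument leveraging the Kallenberg representation \cref{thm:graphex_rep_theorem}: conditional on $\W$, the process $\KEG_{s_k}$ is built from independent Poisson processes driving the $W$, $S$, and $I$ components together with independent Bernoulli edge indicators, and $Y_k(m)$ is a bounded polynomial of fixed degree in these independent inputs. A Hoeffding-style decomposition of $Y_k(m) - \EE[Y_k(m)]$ over vertex subsets, combined with the integrability conditions in the definition of a graphex, should give a variance bound of the form $\var{Y_k(m)} = O(1/s_k)$ (or a comparable decay). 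Along a subsequence of $(s_k)$ chosen rapidly enough to make the bound summable, Borel--Cantelli yields almost-sure convergence; an interpolation argument between consecutive terms of the subsequence, using that $\xi^k(U)$ is monotone in $r$ and that neighbouring values of $r/s_k$ differ by little, handles the intervening indices. Intersecting the null sets over $\mathcal{U} \times \Nats$ and invoking \cref{sep_class_suffices} pathwise completes the proof.

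\textbf{Main obstacle.} I expect the bulk of the technical work to be in the variance estimate, and specifically in handling the non-graphon components of the graphex. The $W$-component alone leads to a relatively clean $U$-statistic calculation controlled by $\int W^2$-type integrals, but the star component $S$ produces high-degree vertices which create long-range dependencies between the indicators defining $Y_k(m)$, and the isolated-edge component $I$ contributes edges whose endpoints are otherwise disconnected from the bulk of the graph. Obtaining the correct scaling in $s_k$ in a unified bound --- so that the same Borel--Cantelli subsequence works for any graphex satisfying the three integrability conditions --- is where I expect to invest the most effort.
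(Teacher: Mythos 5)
Your opening reduction (via \cref{sep_class_suffices} to countably many test sets and count levels, then intersecting null sets) coincides with the paper's first step, and your centering observation is correct: by \cref{invrelabeling} and projectivity, $\EE[\Pr(\xi^k(U)\in\cdot\given G_k)]=\Pr(\KEG(U)\in\cdot)$ exactly once $U\subseteq[0,s_k]^2$. From there, however, you take a concentration-plus-Borel--Cantelli route, and as written it has genuine gaps rather than merely deferred computations. The most serious is the interpolation step, which is precisely what must deliver almost-sure convergence along an \emph{arbitrary} sequence $s_k\upto\infty$ (a variance bound of order $1/s_k$ is not summable when $s_k$ diverges slowly, so passing to a subsequence is unavoidable). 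The quantity to be interpolated, $Y_k(m)=\Pr(\xi^k(U)=2m\given G_k)$, is an exact-count probability: it is not monotone in $r$, and between consecutive subsequence indices \emph{both} the graph $G_k$ and the sampling fraction $r/s_k$ change. The monotonicity of $\xi^k(U)$ in $r$ that you invoke therefore does not help; you would need a quantitative coupling comparing $r/s_k$-sampling of $G_k$ with $r/s_{k_j}$-sampling of $G_{k_j}$, uniform over the realization, followed by a double limit in the subsequence mesh --- none of which is supplied. Second, the claimed variance bound $\var{Y_k(m)}=O(1/s_k)$ is only asserted; for a general graphex the Efron--Stein/Hoeffding influences are controlled by $\mu_W$ and $S$, which need not admit second moments, and this is exactly where you concede the difficulty lies. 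Third, your identification of $Y_k(m)$ with a $p$-sampling statistic forces $U=[0,r)^2$, but corner squares (indeed single rectangles) are not a separating class; \cref{sep_class_suffices} requires control of counts on finite unions of rectangles, so the reduction as set up is incomplete (fixable, since \cref{invrelabeling} gives the centering for arbitrary $U$, but then the concentration analysis must be redone for general unions).

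For contrast, the paper's proof sidesteps all of this. It observes that $\EE[f(\xi^k(U))\given G_k]=\EE[f(\KEG(U))\given\mathcal{F}_{-s_k}]$, where $\mathcal{F}_{-s}$ is the graph with labels below $s$ forgotten, uses joint exchangeability to replace $f(\KEG(U))$ by the average of its translates $f(\KEG(U_t))$, and then applies the ergodic theorem (the translated process is stationary and mixing, since counts on windows separated by more than $r$ are independent) together with a conditional dominated-convergence result to identify the almost-sure limit as the constant $\EE[f(\KEG(U))]$. That argument works for every deterministic $s_k\upto\infty$ and every graphex, with no variance estimate, no subsequence, and no interpolation; if you wish to salvage your route, the coupling-based comparison across nearby sizes and the influence bounds for the $S$ and non-integrable $W$ components are the places where real proofs are still owed.
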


\begin{proof}
For each $k \in \Nats$, conditional on $G_k$, let $\xi^k$ be a point process with law $\REmbed{G_k, s_k}$.
Note that $\KEG \dist \KEGDinf{\W}$. Observe that the collection $\mathcal{U}$ of finite unions of rectangles with rational end points
is a separating class for $\NNReals^2$. Further, $\xi^k$ is simple for all $k \in \Nats$, as is $\KEG$. 
Thus by \cref{sep_class_suffices}, 
to show the claimed result it will suffice to show that, for all $U \in \mathcal{U}$,
$\Pr(\xi^k(U) \in \cdot \given G_k) \to \Pr(\KEG(U) \in \cdot)$ weakly as $k \to \infty$.

Fix $U$. To establish this condition we first show that
for all bounded continuous functions $f$, it holds that $\lim_{k\to\infty}\EE[f(\xi^k(U)) \given G_k] = \EE[f(\KEG(U))] \as$
Let $\mathcal{F}_{-s}$ be the partially labelled graph
derived from $\KEG$ by forgetting the labels of all nodes
with label $\theta_i<s$. 
Take $r \in \NNReals$ large enough so that $U \subset [0,r]^2$.
Then for $s_k > r$, 
\[
\EE[f(\KEG(U)) \given \mathcal{F}_{-{s_k}}] = \EE[f(\xi^k(U)) \given G_k].
\] 

Define $U_t = U + (t,t)$ for $t \in \NNReals$ and let 
\[
X^{(r)}_s = \frac {1} {s-r} \int_0^{s-r} f(\KEG(U_t)) \intd t.
\]
Observe that for $s$ such that $t \le s-r$, the joint exchangeability of $\KEG$ implies
\[
\EE[f(\KEG(U_t)) \given \mathcal{F}_{-s}] = \EE[f(\KEG(U)) \given \mathcal{F}_{-s}].
\] 
Moreover, by the linearity of conditional expectation,
for $s>r$, it holds that $\EE[X^{(r)}_s \given \mathcal{F}_{-s}] = \EE[f(\KEG(U)) \given \mathcal{F}_{-s}]$.

A standard result \citep[][Ex.~5.6.2]{Durrett:2010} shows that
$\lim_{k\to\infty}\EE[X^{(r)}_{s_k} \given \mathcal{F}_{-{s_k}}] = \EE[X^{(r)}_\infty \given \mathcal{F}_{-\infty}] \as$ if $X^{(r)}_{s_k} \to X^{(r)}_\infty \as $
and there is some integrable random variable that dominates $X^{(r)}_{s_k}$ for all $k$;
the second condition holds because $f$ is bounded.
Notice that $Y_t = f(\KEG(B_t))$ is a stationary stochastic process. Moreover, it's easy to see from the {\kegname} construction
that $Y_t$ and $Y_{t'}$ are independent whenever $\abs{t-t'}>r$, so $(Y_t)$ is mixing. 
The ergodic theorem then gives $\lim_{k\to\infty} X^{(r)}_{s_k} = \EE[f(\KEG(U))] \as$
This means 
\[
\lim_{k\to\infty}\EE[f(\xi^k(U)) \given G_k] \to \EE[f(\KEG(U))] \as,
\]
as promised.

For $l \in \NNInts$, let $f_l(\cdot) = 1[\cdot \le l]$, 
let $A^{(U)}_l$, for each $U \in \mathcal{U}$, be the set on which 
\[
\lim_{k\to\infty}\EE[f_l(\xi^k(U)) \given G_k] = \EE[f_l(\KEG(U))]
\] 
and let $A_U = \bigcap_l A^{(U)}_l$. We have shown that $\Pr(A^{(U)}_l)=1$, and so $\Pr(A_U) = 1$ and on $A_U$ it holds that $\lim_{k\to\infty} \Pr(\xi^k(U) \in \cdot \given G_k) = \Pr(\KEG(U) \in \cdot)$ weakly. Let $A = \bigcap_{U\in\mathcal{U}} A_U$, then $\Pr(A) = 1$ and on $A$ it holds that 
\[
\lim_{k\to\infty}\Pr(\xi^k(U) \in \cdot \given G_k) = \Pr(\KEG(U) \in \cdot)
\] 
weakly for all $U \in \mathcal{U}$, completing the proof.

\end{proof}

We need to do a little bit more work to show convergence in the case where the
observations are taken at the jumps of the {\kegname}.

\begin{thm}\label{conv_of_emp_proc_jump_times}
Let $\KEG$ be a {\kegname} generated by a non-trivial graphex $\W$,
and let $\tau_1, \tau_2, \dots$ be the jump times of $\KEG$. 
Let $G_k = \CAM{\KEG_{\tau_k}}$ for each $k \in \Nats$.
Then $\REmbed{G_k,\tau_k} \to \GPD{\W}{\infty}$ weakly almost surely as $k\to\infty$.
\end{thm}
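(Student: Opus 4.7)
The plan is to reduce to the argument of \cref{conv_of_emp_proc} by showing its conclusion applies at the random times $\tau_k$. Since $\W$ is non-trivial, the edge count $E(s) = \tfrac{1}{2}\KEG[0,s]^2$ diverges to infinity almost surely, so the jump times are well-defined and $\tau_k \upto \infty \as$

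For each $U$ in the countable separating class $\mathcal{U}$ and each test function $f_l = 1[\,\cdot \le l\,]$ used in \cref{sep_class_suffices}, the proof of \cref{conv_of_emp_proc} establishes, for each deterministic $s > r$ with $U \subseteq [0,r]^2$, the pointwise identity
\*[
\EE[f_l(\KEG(U)) \given \mathcal{F}_{-s}] = h_{U,l}(G_s, s), \qquad h_{U,l}(H,t) := \EE[f_l(\RLabel[t]{H}(U))],
\]
together with the a.s.\ convergence of the left-hand side to $\EE[f_l(\KEG(U))]$ along a given sequence. Because $\{\mathcal{F}_{-s}\}_{s \in \NNReals}$ is right-continuous in the parameter $-s$, the reverse martingale $s \mapsto \EE[f_l(\KEG(U)) \given \mathcal{F}_{-s}]$ admits a jointly measurable cadlag version, and the continuous-parameter Levy downward theorem upgrades the convergence to hold as $s \to \infty$ along every path, on a single measure-1 set depending only on $(U,l)$. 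Taking the countable intersection over $\mathcal{U} \times \NNInts$ produces a measure-1 set $\Omega_0$ on which the convergence holds simultaneously along every (possibly random) sequence $s_k \upto \infty$, and in particular for $s_k = \tau_k$.

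Evaluating the functional identity at the random argument $s = \tau_k$ on $\Omega_0$ then yields
\*[
\EE[f_l(\xi^k(U)) \given G_k, \tau_k] = h_{U,l}(G_k, \tau_k) \to \EE[f_l(\KEG(U))] \quad \as
\]
The main obstacle is justifying this substitution rigorously, since the deterministic functional identity is only an a.s.\ equality whose exceptional null set may a priori depend on $s$. I would address this by working with bi-measurable cadlag versions of both sides in $s$, so that pointwise evaluation at $s = \tau_k(\omega)$ is well-defined and consistent with $\EE[f_l(\xi^k(U)) \given G_k, \tau_k]$; the analogous identification of conditional distributions at a random scale is the reverse-filtration, stopping-time counterpart of the exchangeability argument used in \cref{conv_of_emp_proc}. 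A final application of \cref{sep_class_suffices} then converts this pointwise convergence on $\mathcal{U} \times \NNInts$ into the desired weak convergence $\REmbed{G_k, \tau_k} \to \KEGDinf{\W}$ almost surely.
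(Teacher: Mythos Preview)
Your approach is genuinely different from the paper's. You propose to upgrade the deterministic-sequence result of \cref{conv_of_emp_proc} to a continuous-parameter statement via a cadlag modification of $s \mapsto \EE[f_l(\KEG(U)) \mid \mathcal{F}_{-s}]$, so that convergence holds along \emph{every} path $s\to\infty$ on a single full-measure set, and then simply plug in $s=\tau_k$. The paper instead separates existence and identification of the limit. For existence it applies the \emph{discrete} reverse martingale convergence theorem directly along the nested sequence $(\mathcal{F}_{-\tau_k})_{k\in\Nats}$. For identification it builds an explicit coupling: it interleaves a subsequence $\{\tau_{k_j}\}$ of the jump times with the deterministic sequence $s_k=\sum_{n=1}^k 1/n$ so that $|\tau_{k_j}-s_{k_j}|\le 1/k_j$, couples $\xi^{(\tau,j)}(U)$ and $\xi^{(s,j)}(U)$ by sharing the uniform variables used in the random labeling, and shows $\Pr(\xi^{(\tau,j)}(U)\neq\xi^{(s,j)}(U)\mid\KEG)\to 0$ using that $G^\tau_j\setminus G^s_j$ is at most a single star and that the discrepancy in inclusion probabilities is $O((\tau_{k_j}-s_{k_j})/s_{k_j})$. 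Since \cref{conv_of_emp_proc} already identifies the limit along the deterministic subsequence, this pins down the limit along the jump times.

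Your route is more conceptual and, if it goes through, would handle any random divergent sequence uniformly rather than exploiting the specific structure of the jump times. The price is precisely the obstacle you isolate: making the substitution $s=\tau_k$ rigorous requires checking filtration regularity for the cadlag modification, verifying that $h_{U,l}(G_s,s)$ is itself cadlag (this is fine: $G_s$ is piecewise constant and $h$ is continuous in the size argument), and confirming that $\tau_k$ behaves as a reverse stopping time so that evaluation at $\tau_k$ is meaningful. The paper's coupling argument is more hands-on but avoids the continuous-parameter theory entirely; it never needs more than the discrete reverse martingale along $(\mathcal{F}_{-\tau_k})$ plus a direct comparison of two sampling schemes at nearby sizes. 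It is worth noting that the paper, like you, asserts the identity $\EE[f(\KEG(U))\mid\mathcal{F}_{-\tau_k}]=\EE[f(\xi^k(U))\mid G_k,\tau_k]$ at the random times without further comment, so the measurability concern you raise is present in both arguments; the difference is only in how the limit is identified.
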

\begin{proof}
For each $k\in\Nats$, let $\xi^k$ be a point process with law $\REmbed{G_k,\tau_k}$.

As in the proof of \cref{conv_of_emp_proc}, to establish the claim it suffices to show that, 
for all bounded continuous functions $f$ and all rectangles $U$, 
it holds that%
\[
\lim_{k\to\infty}\EE[f(\xi^k(U)) \given G_k, \tau_k] = \EE[f(\KEG(U))] \as
\]

Let $\mathcal{F}_{-s}$ be as in proof of \cref{conv_of_emp_proc}.
It is clear that $\mathcal{F}_{-\tau_k} \subset \mathcal{F}_{-\tau_(k-1)}$ for all $k$.
Because $U \subset [0,r]^2$ for some finite $r$ and $\tau_k \upto \infty \as$ as $k \to \infty$ it holds that
\[
\lim_{k\to\infty}\EE[f(\KEG(U)) \given \mathcal{F}_{-\tau_k}] = \lim_{k\to\infty} \EE[f(\xi^k(U)) \given G_k, \tau_k] \as
\]
Applying reverse martingale convergence to the l.h.s. we conclude the r.h.s. exists a.s.

It remains to identify the limit.  
To that end, we will define a coupling between the counts on test set $U$ 
at a subsequence of the jump times and the counts on $U$
at some deterministic sequence, which is known to converge
to the desired limit.
Let $s_k = \sum_{n=1}^k \frac{1}{n}$, 
let $\{\tau_{k_j}\}$ be a subsequence of the jump times defined such that
at most one point in $\{\tau_{k_j}\}$ lies in $[s_l,s_{l+1})$ for all $l$ and define
$s_{k_j}$ to be the subsequence of $\{s_k\}$ such that $s_{k_j}$ is the largest value in $\{s_k\}$ that is smaller than $\tau_{k_j}$. 
Intuitively, this gives a random subsequence of the jump times and a random subsequence of $\{s_k\}$ such that the
points $s_{k_j}$ and $\tau_{k_j}$ become arbitrarily close as $j\to\infty$.
For each $j \in \Nats$, let $G^s_j = \CAM{\KEG_{s_{k_j}}}$, and let $G^\tau_j = \CAM{\KEG_{\tau_{k_j}}}$.
 
By construction, $G^s_j \subset G^\tau_j$. Label the vertices of $G^\tau_j$ as
$1,\dots,\vertices(G^\tau_j)$ such that $1,\dots,\vertices(G^s_j)$ is the vertex set of $G^s_j$.
Let $\xi^{(s,j)} = \RLabel[s_{k_j}]{G^s_j}$, and let $\xi^{(\tau,j)} = \RLabel[\tau_{k_j}]{G^\tau_j}$. 
The occupancy counts of the test may then sampled according to:
\begin{enumerate}
\item $V_1,\dots,V_{\vertices(G^\tau_j)} \distiid \uniDist[0,1]$
\item $\xi^{(\tau,j)}(U) = |\{(v_i,v_j) \in e(G^\tau_j) \st (V_i \tau_{k_j}, V_j \tau_{k_j} ) \in U \}|$
\item $\xi^{(s,j)}(U) = |\{(v_i,v_j) \in e(G^s_j) \st (V_i s_{k_j}, V_j s_{k_j} ) \in U \}|$
\end{enumerate}

By construction, $G^\tau_j \exclude G^s_j$ is a star; call the center of this star $c$. Choosing $r$ such that $U \subset [0,r]^2$,
it is clear that if $V_c\tau_{k_j} \notin [0,r]$ then $\xi^{(\tau,j)}(U) \le \xi^{(s,j)}(U)$ under this coupling. 
The occupancy counts are the number of edges in random induced subgraphs given by including each vertex with probability 
$\frac{r}{\tau_{k_j}}$ and $\frac{r}{s_{k_j}}$ respectively. This perspective makes it clear that, conditional on $c$ not being included
when sampling from $G^\tau_j$, the counts will be equal as long as no vertices of the induced subgraph of $G^s_j$ are ``forgotten'' when 
the inclusion probability is reduced to $\frac{r}{\tau_{k_j}}$. 
The probability that $V_i s_{k_j} \in [0,r]$ but $V_i \tau_{k_j} \notin [0,r]$ 
is $\frac {r}{\tau_{k_j}s_{k_j}} (\tau_{k_j} - s_{k_j}) $. Moreover, there are at most $\xi^{(s,j)}(U)$ vertices in the subgraph sampled from
$G^s_j$ so, in particular,
\[
\EE[\xi^{(s,j)}(U) - \xi^{(\tau,j)}(U) \given E_{\bar{c}}, \KEG] \le \frac {r}{\tau_{k_j}s_{k_j}} (\tau_{k_j} - s_{k_j}) \xi^{(s,j)}(U),
\]
where $E_{\bar{c}}$ denotes the event that $c$ is not included in the subgraph sampled from $G^\tau_j$. 
Then, denoting the event $\{ \xi^{(s,j)}(U) = \xi^{(\tau,j)}(U) \}$ as $E_U$,

\[
\Pr(E_U \given \KEG) &\ge (1-\Pr(E_{\bar{c}}))(1-\Pr(\bar{E_U} \given \KEG, E_{\bar{c}}))\\
 &\ge (1-\frac {r}{\tau_{k_j}}) (1- \frac {r}{\tau_{k_j}s_{k_j}} (\tau_{k_j} - s_{k_j}) \xi^{(s,j)}(U).
\]

By construction, $\tau_{k_j} - s_{k_j} \le \frac{1}{k_j}$, so $\lim_{j\to\infty}\frac{\tau_{k_j} - s_{k_j}} {s_{k_j}} = 0$. In combination with 
$\lim_{j\to\infty}\xi^{(s,j)}(U) = \KEG(U) \as$ and the fact that $\KEG(U)$ is almost surely finite, the inequality we have just derived then implies that
\[
\lim_{j\to\infty}\Pr(\xi^{(s,j)}(U) \neq \xi^{(\tau,j)}(U) \given \KEG) = 0 \as
\]

In view of \cref{conv_of_emp_proc}, we thus have that
\[
\lim_{k\to\infty} \EE[f(\xi^k(U)) \given G_k, \tau_k] = \EE[f(\KEG(U))] \as,
\] 
as required.

\end{proof}

\subsection{Asymptotic Equivalence of Sampling Schemes}\label{samp_equiv}

As alluded to above, a key insight for showing that $\hat{W}_{(G_k,s_k)}$ is a valid estimator is that, 
conditional on $G_k$, a graph generated according to $\GPD{\hat{W}_{(G_k,s_k)}}{r}$
may be viewed as a random subgraph of $G_k$ induced by sampling 
$\poiDist(\frac{r}{s_k} \vertices(G_k))$ vertices from $G_k$ 
with replacement and returning the edge set of the vertex-induced subgraph.
The correctness of this scheme can be seen as follows:
\begin{enumerate}
\item Let $\PP$ be the latent Poisson process used to generate a sample from $\GPD{\hat{W}_{(G_k,s_k)}}{r}$, as in \cref{thm:graphex_rep_theorem},
  and let $\PP_r = \PP(\cdot \cap [0,r]^2)$.
  Because $\hat{W}_{(G_k,s_k)}$ has compact support $[0,\vertices(G_k)/s_k]^2$,
  only $\PP_r$ restricted to $[0,r] \times [0,\vertices(G_k)/s_k]$ can participate in the graph.
\item $\PP_r$ restricted to $[0,r] \times [0,\vertices(G_k)/s_k]$ may be generated by producing
  $J_{s_k,r} \dist \poiDist(r \vertices(G_k)/s_k)$ points $(\theta_i,\vartheta_j)$ where, conditional on $J_{s_k,r}$,
  $\theta_i \distiid \uniDist[0,r]$ and $\vartheta_i \distiid \uniDist[0,\vertices(G_k)/s_k]$, also independently of each other.
\item The $\{0,1\}$-valued structure of $\hat{W}_{(G_k,s_k)}$ means that choosing latent values $\vartheta_i \distiid \uniDist[0,\vertices(G_k)/s_k]$
  is equivalent to choosing vertices of $G_k$ uniformly at random with replacement.
\end{enumerate}

Our task is to show that the sampling scheme just described
is asymptotically equivalent to $r/s_k$-sampling of $G_k$. 
To that end, we observe that $r/s_k$-sampling is the same as sampling
$\binDist(\vertices(G_k),r/s_k)$ vertices of $G_k$ without replacement and returning
the induced edge set. This makes it clear that there are two main distinctions
between the sampling schemes: Binomial vs.\ Poisson number of vertices sampled,
and with vs.\ without replacement sampling. 
This motivates defining three distinct random subgraphs of $G_k$:  
\begin{enumerate}
\item $X^{(k)}_r$: Sample $\binDist(\vertices(G_k),\frac{r}{s_k})$ vertices without replacement and return the induced edge set
\item $H^{(k)}_r$: Sample $\binDist(\vertices(G_k),\frac{r}{s_k})$ vertices with replacement and return the induced edge set
\item $M^{(k)}_r$: Sample $\poiDist(\frac{r}{s_k}\vertices(G_k))$ vertices with replacement and return the induced edge set
\end{enumerate}
The observation that, conditional on $G_k$, $\xi^k_r \equaldist \RLabel[r]{X^{(k)}_r}$ makes the connection with the previous subsection clear.

Our aim is to show that when $r/s_k$ is small the different random subgraphs are all close in distribution.
A natural way to encode this is the total variation distance between their distributions.
However, because the distributions are themselves random ($G_k$ measurable) variables this is rather awkward.
It is instead convenient to work with couplings of the random subgraphs conditional on $G_k$;
this gives a natural notion of conditional total variation distance. See \citep{denHollander:2012} for an introduction to coupling arguments. 

Although we only need the sampling equivalence for sequences of graphs corresponding to a {\kegname},
we state the theorems for generic random graphs where possible.

The following result, which plays a similar role in the estimation
theory of graphons in the dense setting, is simply the asymptotic
equivalence of sampling with and without replacement.

\begin{lem}\label{w_wo_samp_equiv}
  Let $G$ be an almost surely finite random graph, with $\edges$ edges and $\vertices$ vertices.
  let $X_r$ be a random subgraph of $G$ given by sampling $\binDist(\vertices(G),\frac{r}{s})$ vertices without replacement and returning the induced edge set, and
  let $H_r$ be a random subgraph of $G$ given by sampling $\binDist(\vertices(G),\frac{r}{s})$ vertices with replacement and returning the induced edge set.
  Then there is a coupling such that 
  \[
  \Pr(H_r \neq X_r \given G) \le  2 \edges \Bigl (\frac{r^3}{s^3} + 2\frac{r^3}{s^3 \vertices^2} + 3 \frac{r^2}{s^2 \vertices} + \frac{r}{s \vertices^2} \Bigr)
  \] 

  Moreover, specializing to the {\kegname} case, with $H^{(k)}_r$ and $X^{(k)}_r$ defined as above, 
  under the same coupling, 
  \[
  \Pr(H^{(k)}_r \neq X^{(k)}_r \given G_k) \convPr 0,
  \]
  as $k \to \infty$.
  Further, if $\tau_1,\tau_2,\dots$ are the jump times of $\KEG$ then
  taking $s_k = \tau_k$ for all $k \in \Nats$, it holds that
  under this coupling
  \[
  \Pr(H^{(k)}_r \neq X^{(k)}_r \given G_k, \tau_k) \convPr 0,
  \]
  as $k \to \infty$.
\end{lem}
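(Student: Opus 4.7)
The plan is to build an explicit coupling of $X_r$ and $H_r$ (conditional on $G$) from a single shared sample, and then union-bound over edges of $G$ to compare the induced edge sets. First I would couple the binomial vertex counts to be equal: draw $N \sim \binDist(\vertices, r/s)$ once, then draw $V_1,\dots,V_N$ i.i.d.\ uniform on $v(G)$ and use the multiset $\{V_1,\dots,V_N\}$ to determine $H_r$. For $X_r$, sweep through $i=1,\dots,N$ and set $V'_i = V_i$ whenever $V_i \notin \{V_1,\dots,V_{i-1}\}$, and otherwise resample $V'_i$ uniformly from $v(G) \setminus \{V'_1,\dots,V'_{i-1}\}$; the marginal of $(V'_1,\dots,V'_N)$ is that of sampling $N$ vertices without replacement, so $\{V'_1,\dots,V'_N\}$ correctly generates $X_r$. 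Under this coupling, the vertex set of $X_r$ is the vertex set of $H_r$ together with the collection of replacement vertices introduced at collision indices $D = \{i : V_i \in \{V_1,\dots,V_{i-1}\}\}$.

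The symmetric difference of edge sets is therefore contained in the set of edges of $G$ that have at least one endpoint equal to some replacement vertex $V'_i$ with $i \in D$. The plan is then the union bound
\[
\Pr(H_r \neq X_r \given G) \le \sum_{e \in E(G)} \Pr(\mathbf{1}[e \in X_r] \neq \mathbf{1}[e \in H_r] \given G).
\]
For a fixed edge $e = (u,v)$, I would split into cases according to which of $u,v$ lie in the multiset $\{V_1,\dots,V_N\}$ and bound each case explicitly: the event that $v$ (say) is a replacement vertex requires a collision to occur among $V_1,\dots,V_N$ and for the uniform draw from the unsampled vertices to land on $v$. Using the conditional distribution of $V'_i$ for $i \in D$, each per-edge probability can be written in closed form in terms of $N$ and $\vertices$; taking expectation over $N \sim \binDist(\vertices, r/s)$ and using the standard moment identities $\EE[N] = \vertices \cdot r/s$, $\EE[N(N-1)] = \vertices(\vertices-1)(r/s)^2$, and $\EE[N(N-1)(N-2)] = \vertices(\vertices-1)(\vertices-2)(r/s)^3$, the four terms in the stated bound appear, each corresponding to a different configuration of endpoint status and replacement.

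For the {\kegname} specialization, I would appeal to the earlier asymptotics of \citep{Veitch:Roy:2015,Borgs:Chayes:Cohn:Holden:2016}: $\edges(G_k)/s_k^2$ converges a.s.\ (or in probability) to a finite limit, and $\vertices(G_k)$ grows at least to infinity while $\vertices(G_k)/s_k$ is either bounded away from $0$ (dense case) or tends to $0$ only slowly enough that the dominant term in the coupling bound, which is proportional to $\edges(G_k) \cdot (r/s_k)^3$, tends to $0$ as $k \to \infty$. The other three terms are smaller in $r/s_k$ or suppressed by extra factors of $\vertices(G_k)$, so each tends to $0$ in probability, giving $\Pr(H^{(k)}_r \neq X^{(k)}_r \given G_k) \convPr 0$. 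The jump-time statement follows from the same calculation: $\tau_k \upto \infty$ almost surely because a nontrivial graphex produces infinitely many edges, and conditioning on $(G_k,\tau_k)$ rather than $G_k$ only adds $\sigma(\tau_k)$-measurable information which does not affect the bound, which is deterministic in $(\vertices(G_k), \edges(G_k), r, \tau_k)$.

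The main obstacle is the bookkeeping in the per-edge probability computation: several subcases (one endpoint sampled versus neither, one replacement versus two, and the dependence of the replacement distribution on which vertices have already been placed) must be tracked so that every contribution is of a form matching one of the four terms in the bound. Once that calculation is organized, the rest is a uniform application of standard binomial moment identities and the known growth rates of vertices and edges in a {\kegname}.
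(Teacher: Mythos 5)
Your coupling for the first claim is essentially the paper's: a single shared $\binDist(\vertices,r/s)$ count, a without-replacement list coupled to a with-replacement list by resampling at collisions, and the observation that the two edge sets can only differ on edges meeting a ``replacement'' vertex. Your per-edge union bound with binomial moment identities is a different bookkeeping from the paper's (which bounds the number of offending list entries $F_1+F_2$ via Markov, then uses $v(X_r)\le 2e(X_r)$ and $\EE[e(X_r)\given K,G]\le \edges K^2/\vertices^2$), but it plausibly yields a bound of the stated form, so the first assertion is fine modulo the case analysis you acknowledge.

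The genuine gap is in the {\kegname} specialization. You propose to drive the crude bound $\edges(G_k)(r/s_k)^3$ to zero using $\edges(G_k)/s_k^2 \to$ a finite limit and growth conditions on $\vertices(G_k)$. Those asymptotics hold only when the graphex is (essentially) an integrable graphon; the lemma is stated for an arbitrary non-trivial graphex, where $\|W\|_1=\infty$ is allowed and $\edges(G_k)/s_k^2$ can diverge almost surely, so $\edges(G_k)(r/s_k)^3$ need not vanish. (The paper's remark immediately after the lemma makes exactly this point: the crude rate is $O(r^3/s_k)$ only in the integrable $(0,0,W)$ case, which is also why almost-sure convergence is available only there.) The paper instead stops at the intermediate bound $\Pr(X^{(k)}_r\neq H^{(k)}_r\given v(X^{(k)}_r),K_{s_k,r},G_k)\le \frac{K_{s_k,r}}{\vertices(G_k)}\,v(X^{(k)}_r)$ and argues separately that $K_{s_k,r}/\vertices(G_k)\convPr 0$ (its conditional mean is $r/s_k$) and that $v(X^{(k)}_r)$ converges in distribution to $\vertices(\KEG_r)$, which is almost surely finite --- and this second ingredient is supplied by \cref{conv_of_emp_proc} (respectively \cref{conv_of_emp_proc_jump_times} for the jump-time case), i.e., by the convergence of the random embeddings, not by edge-count asymptotics. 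Without some substitute for this tightness argument your second and third claims do not follow for general graphexes; your jump-time argument inherits the same problem, since it again leans on the deterministic bound in $(\vertices,\edges,\tau_k)$. (A minor additional slip: for sparse graphexes $\vertices(G_k)/s_k$ tends to infinity, not to zero or a positive constant, though this does not affect the main issue.)
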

\begin{proof}
  Given $G$, we may sample $X_r$ according to the following scheme:
  \begin{enumerate}
    \item Sample $K_{s,r} \dist \binDist(\vertices, \frac {r}{s})$ 
    \item Sample a list $L = (L_1,L_2,\dots,L_{K_{s,r}})$ of vertices from $G$ without replacement 
    \item Return the edge set of the induced subgraph given by restricting $G$ to $L$
  \end{enumerate}
  
  Given $G$, we may sample $H_r$ similarly, except we use a list sampled
  with replacement; we couple $H_r$ and $X_r$ by coupling with and without
  replacement sampling of the vertex list.
  The following sampling scheme for a list
  $\tilde{L}$ returns a list that, given $G$, has the distribution
  of a length $K_{s,r}$ list of vertices sampled with replacement from $G$.
  Given $G$ we sample $\tilde{L}$ according to:
  \begin{enumerate}
    \item Sample $L$ as above
    \item $\tilde{L}_1 = L_1$
    \item For $j=1\dots K_{s,r}$, set $\tilde{L}_j = L_j$ with probability $1-\frac{j-1}{\vertices}$. Otherwise, 
      sample $\tilde{L}_j$ uniformly at random from $\{L_1,\dots,L_{j-1}\}$.
  \end{enumerate}
  $H_r$ is then sampled by returning the edge set of the induced subgraph given by taking
  $\tilde{L}$ as the vertex set. 

  Evidently, under this coupling,  $X_r = H_r$ as long as
  \begin{enumerate}
    \item Every entry of $L$ where $L \neq \tilde{L}$ does not participate in an edge in $X_r$
    \item Every entry of $\tilde{L}$ where $L \neq \tilde{L}$ does not participate in an edge in $X_r$
  \end{enumerate}
  Call the number of entries violating the first condition $F_1$ and the number of entries violating the second
  condition $F_2$, and let $N$ be the total number of entries where $L,\tilde{L}$ differ. Observe that when $K_{s,r}>0$, almost surely,
  \[
  \EE[F_1 \given v(H_r), N, K_{s,r}, G] &= \frac {v(X_r)} {K_{s,r}} N  \\ 
  \EE[F_2 \given v(H_r), N, K_{s,r}, G] &= \frac {v(X_r)} {K_{s,r}} N. 
  \]

  Further observe that because the sites where the lists disagree are chosen without reference to the graph structure it holds that $v(X_r)$ and $N$ are independent given $G$ and $K_{s,r}$, so 
  \[
    \EE[F_1 + F_2 \given v(X_r), K_{s,r}, G] = 2 \frac{v(X_r)}{K_{s,r}} \EE[N \given K_{s,r}, G].
  \]
  Moreover, almost surely,
  \[
    \EE[N \given K_{s,r}, G] &= \sum_{j=2}^{K_{s,r}}\frac{j-1}{\vertices} \\
    &= \frac{1}{2\vertices}(K_{s,r}^2-K_{s,r}). \label{w_wo_markov} 
  \]
  
Using Markov's inequality 
along with the observation that
  \[
  \Pr(X_r \neq H_r \given K_{s,r} < 2) = 0,
  \]
  and $K_{s,r}^2-K_{s,r} \le K_{s,r}^2$ on $K_{s,r} \ge 2$,
  \cref{w_wo_markov} implies that, almost surely,
  \[
  \Pr(X_r \neq H_r \given v(X_r), K_{s,r}, G) &\le \EE[F_1 + F_2 \given v(X_r), K_{s,r}, G] \\  
                          & \le \frac{K_{s,r}}{\vertices} v(X_r). \label{w_wo_smart_bound}
  \]

To prove the first assertion of the theorem statement,  
we now observe that $v(X_r) \le 2e(X_r)$ and $\EE[e(X_r) \given s, K_{s,r}, G] \le \edges \frac{K_{s,r}^2}{\vertices^2}$ (since each edge is included
with marginal probability at most $\frac{K_{s,r}^2}{\vertices^2}$), so it holds almost surely that 

\[
\Pr(X_r \neq H_r \given s, G) &\le \edges \frac{2}{\vertices^3} \EE[K_{s,r}^3 \given G]\\
                                      &= 2 \edges (\frac{r^3}{s^3} - 3 \frac{r^3}{s^3\vertices} + 2\frac{r^3}{s^3\vertices^2} + 3 \frac{r^2}{s^2\vertices} - 3 \frac{r^2}{s^2\vertices^2}
                                      + \frac{r}{s\vertices^2}).\label{w_wo_fin_bd}
\]

To prove the second assertion of the theorem statement we apply \cref{w_wo_smart_bound}
to the graph $G_k$ sampled at rate $r/s_k$, so
\[
\Pr(X^{(k)}_r \neq H^{(k)}_r \given v(X^{(k)}_r), K_{s_k,r}, G_k) \le \frac{K_{s_k,r}}{\vertices} v(X^{(k)}_r).
\]
Markov's inequality with $\EE[\frac{K_{s_k,r}}{\vertices(G_k)} \given G_k] = r/{s_k}$ implies that, given $G_k$, $\frac{K_{{s_k},r}}{\vertices(G_k)} \convPr 0$ as $k \to \infty$. Further,
by \cref{conv_of_emp_proc} and the observation that $X^{(k)}_r \equaldist \CAM{\xi^k(\cdot \cap [0,r]^2)}$ where $\xi^k \dist \REmbed{G_k,s_k}$,
it holds that $v(X^{(k)}_r) \convDist \vertices(\KEG_r) \as$ as $k \to \infty$. Since the integrability conditions on graphexes
guarantee that $\vertices(\KEG_r)$ is almost surely finite, 
\NA{we have
\[
\frac{K_{s_k,r}}{\vertices(G_k)} v(X^{(k)}_r) \convPr 0,
\]
as $k \to \infty$} and this implies,
\[
\Pr(X^{(k)}_r \neq H^{(k)}_r \given v(X^{(k)}_r), K_{s_k,r}, G_k) \convPr 0,
\] 
as $k \to \infty$.
Now,
\[
\Pr(X^{(k)}_r \neq H^{(k)}_r \given G_k) = \EE[\Pr(X^{(k)}_r \neq H^{(k)}_r \given v(X^{(k)}_r), K_{s_k,r}, G_k) \given G_k],
\]
and $\Pr(X^{(k)}_r \neq H^{(k)}_r \given G_k)$ is bounded by $1$ for all $k$, so the second claim follows by the dominated convergence theorem for conditional expectations, \citep[][Thm.~5.9]{Durrett:2010}.

The proof of the final claim goes through mutatis mutandis as the proof of the second assertion, 
subject to the observations that $\tau_k \upto \infty \as$, that we must condition on $\tau_k$ for each $k$, and that \cref{conv_of_emp_proc_jump_times} should be used in place of \cref{conv_of_emp_proc}.
\end{proof}

\begin{remark}
In the case that $\W = (0,0,W)$ and $W$ is integrable, it holds that $\vertices(G_k) = \Omega(s_k) \as$ and $\edges(G_k) = \Theta(s_k^2) \as$ \citep[][Props.~2.18~and~5.2]{Borgs:Chayes:Cohn:Holden:2016},
in which case the rate from the first part of the above lemma is $O(r^3/s_k)$. 
Note that in this case, the convergence in probability may be replaced by convergence almost surely.
This lemma is in fact the only component of the proof where a weakening of almost sure convergence is necessary, so (as remarked below),
whenever almost sure convergence holds for the equivalence of with and without replacement sampling, almost sure convergence holds for the
main estimation result. 
\end{remark}

It remains to show that the $\poiDist(r/s_k \vertices(G_k))$ and $\binDist(\vertices(G_k),r/s_k)$ samplings are asymptotically equivalent.
Note that the rate ($\vertices(G_k)/s_k$) at which the empirical graphon is dilated guarantees that the expected
number of vertices sampled according to each scheme is equal; this is the reason that this rate was chosen.

\begin{lem}\label{poi_bin_equiv}
  Let $G$ be an almost surely finite random graph with $\vertices$ vertices.
  Let $H_r$ be a random subgraph of $G$ given by sampling $\binDist(\vertices, \frac{r}{s})$ vertices with replacement and returning the induced edge set,
  and let $M_r$ be a random subgraph of $G$ given by sampling $\poiDist(\vertices \frac{r}{s})$ vertices with replacement and returning the induced edge set.
  Then there is a coupling such that
   \[
   \Pr(H_r \neq M_r \given G) \le \frac r s \as
   \] 

\end{lem}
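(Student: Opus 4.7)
The plan is to couple $H_r$ and $M_r$ by sharing a single iid uniform vertex sequence together with an optimal coupling of the sample-size random variables. Conditional on $G$, draw $V_1, V_2, \dots$ iid uniform on the vertex set of $G$, and couple $K \dist \binDist(\vertices, r/s)$ with $J \dist \poiDist(\vertices r/s)$ so that $\Pr(K \ne J \given G)$ attains the total variation distance between their laws. Take $H_r$ (resp.\ $M_r$) to be the subgraph of $G$ induced by $\{V_1, \dots, V_K\}$ (resp.\ $\{V_1, \dots, V_J\}$); the marginal distributions required by the lemma are then immediate from the construction.

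The key observation is that on the event $\{K = J\}$ the two sampled multisets coincide, so $H_r = M_r$ on that event. Hence
\[
\Pr(H_r \ne M_r \given G) \le \Pr(K \ne J \given G) = \dTV{\binDist(\vertices, r/s)}{\poiDist(\vertices r/s)} \as
\]
It therefore suffices to bound this total variation distance by $r/s$. I would appeal to the Stein--Chen / Barbour--Holst--Janson refinement of Le Cam's inequality, which gives $\dTV{\binDist(n,p)}{\poiDist(np)} \le \min(np^2,\, p)$; with $n = \vertices(G)$ and $p = r/s$ this is at most $r/s$, as required.

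The main obstacle is the total-variation estimate itself: the elementary term-by-term coupling of Bernoullis with Poissons (the Le Cam bound) only delivers $np^2 = \vertices(r/s)^2$, which is weaker than $r/s$ precisely in the regime $\vertices r/s > 1$, so one genuinely needs the Stein--Chen sharpening (or an equivalent Poisson-approximation argument) to get the unconditional bound $p$. Everything else in the argument---sharing the uniform sequence, coupling the counts optimally, and observing that equal counts force equal multisets---is routine and uses no graph-specific structure beyond the shared vertex enumeration.
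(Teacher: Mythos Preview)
Your proposal is correct and matches the paper's proof essentially step for step: share an i.i.d.\ uniform vertex sequence, couple the Binomial and Poisson sample-size variables, and invoke the refined Poisson-approximation bound $\Pr(K \neq J \mid G) \le \lambda^{-1}\sum_i p_i^2 = r/s$ (which the paper cites from den Hollander's coupling notes, and which is exactly the Stein--Chen sharpening you identify as necessary).
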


\begin{proof}

  Conditional on $G$, $H_r$ may be sampled by:
  \begin{enumerate}
  \item sample $K_{s,r} \dist \binDist(\vertices,r/s)$ vertices with replacement from $G$;
  \item return the edge set of the induced subgraph.
  \end{enumerate}

  Conditional $G$, $M_r$ may be sampled by:
  \begin{enumerate}
  \item sample $J_{s,r}\dist\poiDist(r \frac{\vertices}{s})$ vertices with replacement from $G$.
  \item return the edge set of the induced subgraph.
  \end{enumerate}

  Comparing the two sampling schemes, it is immediate that there is a
  coupling such that
  \[
  \Pr(H_r \neq M_r \given G) &\le \Pr(K_{s,r} \neq J_{s,r} \given G).
  \]

  Note that $\expect{K_{s,r} \given G}=\expect{J_{s,r} \given G}$.
  The approximation of a sum of Bernoulli random variables by a Poisson
with the same expectation as the sum is well studied: 
if $X_1,\dots,X_l$
  are independent random variables with $\bernDist(p_i)$ distributions
  such that $\lambda=\sum_{i=1}^lp_i$ and $T\dist\poiDist(\lambda)$
  then there is a coupling~\citep[][Sec.~5.3]{denHollander:2012} such that 
$\Pr(T\neq\sum_{i=1}^lX_i) \le \frac 1 \lambda \sum_{i=1}^sp_i^2$. 
This implies that there is a coupling of
  $K_{s,r}$ and $J_{s,r}$ such that
  \[
  \Pr(K_{s,r} \neq J_{s,r} \given G) &\le \frac r s,
  \]
  completing the proof.
\end{proof}

\subsection{Estimating $\W$}

We now combine our results to show that the law of the {\kegname} generated by the empirical graphex 
converges to the law of a graphex process generated by the underlying $\W$.

There is an immediate subtlety to address: \cref{conv_rembed} deals with convergence in distribution of point processes
(i.e., labeled graphs), and \cref{samp_equiv} deals with convergence in distribution of unlabeled graphs.
We first give the main convergence result for the point process case.
In order to state this result compactly it is convenient to metrize weak convergence.
To this end, we recall that the space of boundedly finite measures may be equipped with a metric such that it is a
complete separable metric space \citep[][Eqn.~A.2.6]{Daley:Vere-Jones:2003:v1}.
Let $\dProk{\cdot}{\cdot}$ be the Prokhorov
metric on the space of probability measures over boundedly finite measures induced by the aforementioned metric. 
Then $\dProk{\cdot}{\cdot}$ metrizes weak convergence:
i.e., for a sequence of boundedly finite random measures $\{\Pi_n\}$ it holds that
$\Pi_n \convDist \Pi$ as $n \to \infty$ if and only if $\dProk{\law(\Pi_n)}{\law(\Pi)} \to 0$ as $n \to \infty$.

\begin{thm}\label{estimation_w_size}
Let $\KEG$ be a {\kegname} generated by non-trivial graphex $\W$ and let $s_1,s_2,\dots$ be a (possibly random) sequence in $\NNReals$ 
such that $s_k \upto \infty$ almost surely as $k \to \infty$.
Let $G_k = \CAM{\KEG_{s_k}}$ for $k \in \Nats$.
Suppose that either
\begin{enumerate}
\item $(s_k)$ is independent of $\KEG_k$, or
\item $s_k = \tau_k$ for all $k \in \Nats$, where $\tau_1, \tau_2, \dots$ are the jump times of $\KEG$. 
\end{enumerate}
Then
\[
\dProk{\KEGDinf{\hat{W}_{(G_k,s_k)}}} {\KEGDinf{\W}}  \convPr 0,
\]
as $k \to \infty$. 
\end{thm}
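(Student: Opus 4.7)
The plan is to reduce the result to weak convergence of the size-$r$ restrictions for every fixed $r > 0$. Since $\dProk{\cdot}{\cdot}$ metrizes weak convergence of probability measures over boundedly finite measures, and weak convergence of a sequence of boundedly finite random measures is characterized by weak convergence of their restrictions to bounded sets, it suffices to show that for every $r > 0$, the law of the restriction to $[0,r]^2$ of a graphex process $\Pi^k$ generated by $\hat{W}_{(G_k, s_k)}$ converges weakly in probability to the law of $\KEG_r$. Write $\Pi^k_r$ for this restriction.

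Next, I would identify $\Pi^k_r$ with a random labeling of one of the sampling constructions of \cref{samp_equiv}. By the bullet list preceding that subsection, because $\hat{W}_{(G_k, s_k)}$ is $\{0,1\}$-valued with support $[0, \vertices(G_k)/s_k]^2$, the conditional law of $\Pi^k_r$ given $G_k$ equals the conditional law of $\RLabel[r]{M^{(k)}_r}$ given $G_k$. \cref{poi_bin_equiv} and \cref{w_wo_samp_equiv} (using the jump-times statement of the latter in case~(2)) then furnish a coupling, conditional on $G_k$, under which $M^{(k)}_r = H^{(k)}_r = X^{(k)}_r$ with conditional probability tending to one in probability. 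Meanwhile, because the i.i.d.\ $\uniDist[0, s_k]$ labels in $\REmbed{G_k, s_k}$ conditioned to lie in $[0, r]$ are i.i.d.\ $\uniDist[0, r]$ with $\binDist(\vertices(G_k), r/s_k)$ count, the conditional law of $\RLabel[r]{X^{(k)}_r}$ given $G_k$ agrees with the restriction of $\REmbed{G_k, s_k}$ to $[0, r]^2$ given $G_k$. Chaining these three identifications, the conditional total variation distance between the law of $\Pi^k_r$ given $G_k$ and the restriction of $\REmbed{G_k, s_k}$ to $[0, r]^2$ given $G_k$ vanishes in probability as $k \to \infty$.

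To finish, I would apply \cref{conv_of_emp_proc} in case~(1) and \cref{conv_of_emp_proc_jump_times} in case~(2), giving $\REmbed{G_k, s_k} \to \KEGDinf{\W}$ weakly almost surely, from which the restricted laws converge weakly almost surely to $\law(\KEG_r)$. Combining with the preceding total variation bound, $\law(\Pi^k_r)$ converges weakly in probability to $\law(\KEG_r)$ for every $r > 0$, which is equivalent to $\dProk{\KEGDinf{\hat{W}_{(G_k, s_k)}}}{\KEGDinf{\W}} \convPr 0$. The main obstacle is cleanly managing the various modes of convergence: the sampling equivalence lemmas produce conditional total variation bounds that are themselves $G_k$-measurable random variables, while the random-embedding theorems deliver almost sure weak convergence. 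The standard fix is to pass to subsequences along which the conditional TV bounds vanish almost surely, run the a.s.\ weak convergence argument on each such subsequence, and conclude in-probability convergence of the Prokhorov distance via the subsequence characterisation of convergence in probability.
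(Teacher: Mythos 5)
Your proposal is correct and follows essentially the same route as the paper: identify the size-$r$ restriction of the process generated by $\hat{W}_{(G_k,s_k)}$ with a random labeling of the Poisson/with-replacement sample $M^{(k)}_r$, use \cref{w_wo_samp_equiv,poi_bin_equiv} to couple it to the $r/s_k$-sampling $X^{(k)}_r$ (whose random labeling is the restriction of $\REmbed{G_k,s_k}$), invoke \cref{conv_of_emp_proc} (resp.\ \cref{conv_of_emp_proc_jump_times}) for the second leg, and lift convergence of the fixed-$r$ restrictions to convergence on $\NNReals^2$. Your subsequence bookkeeping at the end is a harmless variant of the paper's use of the triangle inequality together with \citep[Lem.~4.4]{Kallenberg:2001} and the restriction homeomorphism.
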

\begin{proof}

For notational simplicity, we treat the deterministic index case first. 

For $r \in \NNReals$, let $\REmbed{G_k, s_k}|_r$ denote the probability measure over point processes on $[0,r]^2$
induced by generating a point process according to $\REmbed{G_k, s_k}$ and restricting to $[0,r]^2$.

By the triangle inequality,
\[
\dProk{\KEGD{\hat{W}_{(G_k,s_k)}}{r}}{\KEGD{\W}{r}} &\le \dProk{\KEGD{\hat{W}_{(G_k,s_k)}}{r}}{\REmbed{G_k, s_k}|_r} \\ 
&\quad + \dProk{\REmbed{G_k, s_k}|_r}{\KEGD{\W}{r}}.
\] 

Conditional on $G_k$ and $s_k$, let $X^k_r$ be an $r/s_k$-sampling of $G_k$ and 
let $M^k_r$ be a random subgraph of $G_k$ given by sampling $\poiDist(\vertices(G_k)r/s_k)$ vertices
with replacement and returning the edge set of the vertex-induced subgraph.
By \cref{w_wo_samp_equiv,poi_bin_equiv} it holds that there is a sequence of couplings such that
\[
\Pr(M^{k}_r \neq X^{k}_r \given G_k, s_k) \convPr 0,\ k\to\infty.
\]
Observe that
$\KEG^k_r \equaldist \RLabel[r]{M^k_r, \{U_i\}}$ and
$\xi^{k}_r \equaldist \RLabel[r]{X^k_r, \{U_i\}}$, where $U_i \distiid \uniDist[0,r]$ for $i \in \Nats$.
Here $\xi^{k}_r$  
is a random labeling of $G_k$, as in \cref{conv_of_emp_proc}.
Thus, the couplings of the unlabeled graphs lift to couplings of the point processes such that
\[
\Pr(\KEG^k_r \neq \xi^k_r \given G_k, s_k) \convPr 0,\ k\to\infty.
\]
The relationship between couplings and total variation distance then implies 
\[
\|\KEGD{\hat{W}_{(G_k,s_k)}}{r} - \REmbed{G_k, s_k}|_r\|_{\mathrm{TV}} \convPr 0,\ k\to\infty,
\]
so also,
\[
\dProk{\KEGD{\hat{W}_{(G_k,s_k)}}{r}}{\REmbed{G_k, s_k}|_r} \convPr 0,\ k\to\infty. \label{emp_graphex_emp_dist_conv}
\]

Second, by \cref{conv_of_emp_proc},
\[
\dProk{\REmbed{G_k, s_k}|_r}{\KEGDinf{\W}} \convPr 0,\ k\to\infty.  \label{emp_dist_conv}
\]

Thus,
\[
\dProk{\KEGD{\hat{W}_{(G_k,s_k)}}{r}}{\KEGD{\W}{r}} \convPr 0,\ k\to\infty.
\]

By \citep[][Lem.~4.4]{Kallenberg:2001}, convergence in probability for each element of a sequence lifts to convergence in probability of the entire sequence:
\[
(\dProk{\KEGD{\hat{W}_{(G_k,s_k)}}{1}}{\KEGD{\W}{1}}, \dProk{\KEGD{\hat{W}_{(G_k,s_k)}}{2}}{\KEGD{\W}{2}}, \cdots ) \convPr 0,\ k\to\infty.
\]
As the space of boundedly finite measures on $\NNReals^2$ is homeomorphic 
to the space of sequences of restrictions of boundedly finite measures to $[0,r]^2$, for $r \in \Nats$,
it follows that 
\[
\dProk{\KEGDinf{\hat{W}_{(G_k,s_k)}}}{\KEGDinf{\W}} \convPr 0,\ k\to\infty.
\]

The same proof mutatis mutandis applies for convergence along the jump times. The main substitution is the use of \cref{conv_of_emp_proc_jump_times} in place of \cref{conv_of_emp_proc}.
\end{proof}

\begin{remark}
For graphexes such that $\edges(G_k)/s_k^3 \to 0 \as $ and $\vertices(G_k) = \Omega(s_k)$ the convergence in probability above can be replaced by almost sure convergence by replacing all the
convergence in probability statements in the body of the proof by almost sure statements. This class of such graphexes includes all integrable $(0,0,W)$.
\end{remark}

\newcommand{\GPDist}{Q}

We now turn to the analogous result for the case of unlabeled graphs generated by the dilated empirical graphon.
We begin with a technical lemma that allows us to deduce convergence in distribution of unlabeled graphs from
convergence in distribution of the associated adjacency measures.
Note that the map taking an adjacency measure to 
its associated graph is measurable, but not continuous, and so this result does not follow from a naive application of the continuous mapping theorem.

\begin{lem}\label{backwards_cont_mapping}
Let $S$ be a discrete space, $T$ a metric space, $Q_1,Q_2,\dots$ a tight sequence of probability measures on $S$,
and $K$ a probability kernel from $S$ to $T$,
such that $K$ is injective when considered as a map from probability measures on $S$ to probability measures on $T$.
If $Q_1K, Q_2K, \dots$ converge weakly to $QK$ then $Q_1,Q_2,\dots$ converges weakly to $Q$.
\end{lem}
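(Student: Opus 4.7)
The plan is a standard subsequence argument powered by tightness, injectivity, and the fact that on a discrete space weak convergence reduces to pointwise convergence of point masses.

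First I would reduce the problem to a subsequence statement: to show $Q_n \to Q$ weakly, it suffices to show that every subsequence of $(Q_n)$ has a further subsequence converging weakly to $Q$. So fix an arbitrary subsequence; by the tightness hypothesis and Prokhorov's theorem we can extract a further subsequence $(Q_{n_k})$ converging weakly to some probability measure $Q'$ on $S$. The goal then reduces to showing $Q' = Q$.

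Next I would show that the pushforward map $R \mapsto RK$ (from probability measures on $S$ to probability measures on $T$) is continuous with respect to weak convergence, using discreteness of $S$. Given any bounded continuous $f : T \to \Reals$, the function $g(s) \defas \int_T f(t)\,K(s,\intd t)$ is bounded by $\|f\|_\infty$, and since $S$ is discrete, $g$ is automatically continuous on $S$. Hence by the definition of $Q_{n_k} K$ and weak convergence $Q_{n_k} \to Q'$,
\[
\int_T f \, \intd(Q_{n_k}K) = \int_S g \, \intd Q_{n_k} \longrightarrow \int_S g \, \intd Q' = \int_T f \, \intd(Q'K),
\]
so $Q_{n_k}K \to Q'K$ weakly.

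Finally, by hypothesis we also have $Q_{n_k} K \to QK$ weakly, so by uniqueness of weak limits $Q'K = QK$. The injectivity hypothesis on $K$ then gives $Q' = Q$, and the subsequence argument outlined above concludes the proof. The only step that is not purely formal is the continuity of the pushforward map; I expect this to be the main (though modest) obstacle, and discreteness of $S$ is exactly what makes the candidate test functions $g$ automatically continuous, bypassing the usual continuity-of-the-kernel assumption that one would otherwise need.
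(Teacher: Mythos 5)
Your proof is correct and rests on the same ingredients as the paper's: tightness/Prokhorov to extract subsequential limits, continuity of the map $R \mapsto RK$ on the discrete space $S$, and injectivity of $K$ to identify the limit. The only differences are organizational — you argue directly via the subsequence principle and verify the continuity of the kernel pushforward by hand with test functions, whereas the paper runs the contrapositive as a two-case contradiction and cites Kallenberg's Lemma 16.24 for that continuity step.
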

\begin{proof}
Assume otherwise.
Case 1: $Q_n \to Q' \neq Q$ weakly. 
By \citep[][Lem.~16.24]{Kallenberg:2001} and the discreteness of $S$,
$Q_nK \to Q'K$ weakly. Since $K$ is injective $Q'K \neq QK$, a contradiction.

Case 2: $Q_n$ does not converge weakly. Since the sequence $Q_n$ is tight it does converge subsequentially. Choose two
infinite subsequences $Q_{i_1},Q_{i_2},\dots$ and $Q_{j_1},Q_{j_2},\dots$ with respective limits $Q',Q''$ with $Q' \neq Q''$. 
But then,
by \citep[][Lem.~16.24]{Kallenberg:2001} and the discreteness of $S$, $Q'_{i_k}K \to Q'K$ and $Q''_{j_k}K \to Q''K$, hence $Q' K = QK = Q''K$, but $K$ is injective, hence $Q'=Q=Q''$, a contradiction.
\end{proof}

The motivating application of this last lemma is showing that
a sequence of graphs $G_1, G_2, \dots$ converge in distribution
if and only if their random labelings into $[0,s]$ for some $s$
also converge in distribution.
To parse the following theorem, note that when $G$ is a finite random graph,
and $s \in \NNReals$, 
then $\Pr(G \in \cdot)\REmbed{\cdot,s} = \Pr(\RLabel[s]{G} \in \cdot)$.  
\begin{lem}\label{graph_conv_equiv_adj_meas_conv}
Let $K_s(\cdot) = \REmbed{\cdot,s}$ for $s \in \NNReals$,
let $Q, Q_1, Q_2, \dots$ be probability measures on the space of almost surely finite random graphs,
let $\zeta_k = Q_k K_s$ and let $\zeta = Q K_s$.
Then, $Q_k \to Q$ weakly as $k \to \infty$ if and only if
$\zeta_k \to \zeta$ weakly as $k \to \infty$.
\end{lem}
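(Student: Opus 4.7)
The plan is to split the biconditional into two directions and reduce each to machinery already established in the paper.

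For the forward direction ($Q_k \to Q$ weakly implies $\zeta_k \to \zeta$ weakly), I would appeal directly to \citep[Lem.~16.24]{Kallenberg:2001} exactly as in the proof of \cref{backwards_cont_mapping}: the state space of almost surely finite unlabeled graphs is countable and discrete, so the kernel $K_s$ is trivially continuous, and the cited lemma gives the transfer of weak convergence through the kernel.

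For the backward direction I would verify the two hypotheses of \cref{backwards_cont_mapping}: injectivity of $K_s$ (as a map on probability measures) and tightness of the sequence $\{Q_k\}$. Injectivity is immediate from the fact that the map $\xi \mapsto \CAM{\xi}$ sending an adjacency measure to its associated unlabeled graph is measurable and is a left inverse of $K_s$: pushing $QK_s$ forward under this map returns $Q$, so distinct $Q$ yield distinct $QK_s$.

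For tightness, I would leverage Prokhorov: since $\zeta_k \to \zeta$ weakly on the Polish space of boundedly finite measures on $\NNReals^2$, the family $\{\zeta_k\}$ is tight, so for each $\epsilon>0$ there exists $N=N(\epsilon)$ such that, writing $\xi_k \sim \zeta_k$, we have $\Pr(\xi_k([0,s]^2) \le 2N) \ge 1 - \epsilon$ uniformly in $k$. But $\xi_k = \RLabel[s]{G_k}$ is supported in $[0,s]^2$ and has total mass exactly $2\,\edges(G_k)$, so this uniformly bounds $\edges(G_k)\le N$ with probability at least $1-\epsilon$, and because the graphs carry no isolated vertices we also have $\vertices(G_k) \le 2N$ with the same probability. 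Since there are only finitely many unlabeled graphs on at most $2N$ vertices, this concentration on a finite subset of the discrete state space is exactly tightness of $\{Q_k\}$. Applying \cref{backwards_cont_mapping} then yields $Q_k \to Q$ weakly.

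I expect the only genuine content is the tightness step, where one has to translate tightness of measure-valued distributions on $\NNReals^2$ into tightness of graph-valued distributions; everything else is a short chase using already-cited results.
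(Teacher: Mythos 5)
Your proof is correct, and its skeleton matches the paper's: the forward direction via discreteness plus \citep[Lem.~16.24]{Kallenberg:2001}, and the backward direction by verifying injectivity of $K_s$ and tightness of $\{Q_k\}$ so that \cref{backwards_cont_mapping} applies. The one place you genuinely diverge is the tightness step. The paper argues more elementarily: it takes the events $E_n = \{\xi : \xi([0,s]^2) \le n\}$, notes these are $\zeta$-continuity sets, so $\zeta_k(E_n) \to \zeta(E_n)$ by Portmanteau, identifies $\zeta_k(E_n) = Q_k(E'_n)$ where $E'_n$ is the (finite) set of graphs with fewer than $n$ edges, and reads off tightness of $\{Q_k\}$ directly. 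You instead invoke the converse half of Prokhorov's theorem on the Polish space of boundedly finite measures to get tightness of $\{\zeta_k\}$, and then translate compactness in that space into a uniform bound on $\xi_k([0,s]^2)$; this works, but it silently uses the characterization of relatively compact sets of boundedly finite measures (uniform boundedness of mass on bounded sets, e.g.\ \citep[Prop.~A2.6.IV]{Daley:Vere-Jones:2003:v1}), which you should cite, whereas the paper's route needs nothing beyond Portmanteau on integer-valued mass functionals. A small offsetting merit of your write-up is that you actually justify injectivity of $K_s$ (via the left inverse $\xi \mapsto \CAM{\xi}$, so that pushing $QK_s$ forward recovers $Q$), a point the paper asserts without argument. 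One cosmetic caveat: the total mass of $\RLabel[s]{G_k}$ is exactly $2\,\edges(G_k)$ only in the absence of self-loops, but since the mass is always between $\edges(G_k)$ and $2\,\edges(G_k)$, your conclusion that $G_k$ is confined to a finite set of unlabeled graphs with probability $1-\epsilon$ is unaffected.
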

\begin{proof}
The forward direction (convergence in distribution of the random graphs implies convergence in distribution of the random adjacency measures)
follows immediately from the discreteness of the space of finite graphs and \citep[][Lem.~16.24]{Kallenberg:2001}.

Conversely, suppose that $\zeta_k \to \zeta$ weakly as $k \to \infty$,
and, for every $n \in \Nats$, let $E_n$ be the set of adjacency measures $\xi$ such that 
$\xi ([0,s]^2) \le n$, i.e., $E_n$ is the event that the graph has fewer than $n$ edges.
Note that $E_n$ is a $\zeta$-continuity set by the definition of $K_s$, and therefore, by weak convergence,
$\zeta_k(E_n) \to \zeta(E_n)$ as $ k \to \infty$ for every $n \in \Nats$.
Let $E'_n$ be the set of graphs with fewer than $n$ edges. By definition,
$Q_k(E'_n) = \zeta_k(E_n)$ and $Q(E'_n) = \zeta(E_n)$, hence
$Q_k(E'_n) \to Q(E'_n)$.
But $E'_n$ is a finite (hence, compact) set, hence $\{Q_k\}_{k \in \Nats}$ is tight.
Noting in addition that $K_s$ is injective, the result follows from \cref{backwards_cont_mapping}.
\end{proof}

The following theorem is a formalization of $\hat{W}_{(G_k, s_k)} \convEstGP \W$ as $k \to \infty$ in probability:

\begin{thm}\label{estimation_w_size_no_labels}
Let $\KEG$ be a {\kegname} generated by non-trivial graphex $\W$ and let $s_1,s_2,\dots$ be a (possibly random) sequence in $\NNReals$ 
such that $s_k \upto \infty$ almost surely as $k \to \infty$.
Let $G_k = \CAM{\KEG_{s_k}}$ for $k \in \Nats$.
Suppose that either
\begin{enumerate}
\item $(s_k)$ is independent of $\KEG_k$, or
\item $s_k = \tau_k$ for all $k \in \Nats$, where $\tau_1, \tau_2, \dots$ are the jump times of $\KEG$. 
\end{enumerate}
Then, for every infinite sequence $N \subseteq \Nats$,
there exists an infinite subsequence $N' \subseteq N$,
such that 
\[
\hat{W}_{(G_k, s_k)} \convEstGP \W \as
\]
along $N'$.

\end{thm}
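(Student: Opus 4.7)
The plan is to derive the theorem from \cref{estimation_w_size} by two standard passages: upgrading convergence in probability to almost sure convergence along a subsequence, and translating weak convergence of (labeled) adjacency measure laws into weak convergence of unlabeled graph laws via \cref{graph_conv_equiv_adj_meas_conv}.

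First, \cref{estimation_w_size} gives $\dProk{\KEGDinf{\hat{W}_{(G_k,s_k)}}}{\KEGDinf{\W}} \convPr 0$ as $k\to\infty$. Given any infinite $N \subseteq \Nats$, the standard fact that convergence in probability implies a.s.\ convergence along a subsequence produces an infinite $N' \subseteq N$ on which this Prokhorov distance tends to $0$ almost surely. On the event of full probability where this holds, we have, along $N'$, the weak convergence $\KEGDinf{\hat{W}_{(G_k,s_k)}} \to \KEGDinf{\W}$ in the space of boundedly finite measures on $\NNReals^2$. For any fixed $s\in\NNReals$, the restriction map $\xi \mapsto \xi(\,\cdot\cap[0,s]^2)$ is a continuity point of $\KEGDinf{\W}$ (the atoms of $\KEG$ are a.s.\ disjoint from the boundary $\partial [0,s]^2$), so we obtain $\KEGD{\hat{W}_{(G_k,s_k)}}{s} \to \KEGD{\W}{s}$ weakly along $N'$, simultaneously for all $s\in\NNReals$ on the same event of full probability.

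Second, by \cref{invrelabeling}, for each graphex $\W'$ and each $s$, the finite {\kegname} distribution $\KEGD{\W'}{s}$ is the pushforward of the unlabeled finite {\kegname} distribution $\GPD{\W'}{s}$ under the labeling kernel $K_s = \REmbed{\cdot,s}$. Applying \cref{graph_conv_equiv_adj_meas_conv} with $Q_k = \GPD{\hat{W}_{(G_k,s_k)}}{s}$ and $Q = \GPD{\W}{s}$, the weak convergence of the labeled adjacency measure laws established in the previous step implies $\GPD{\hat{W}_{(G_k,s_k)}}{s} \to \GPD{\W}{s}$ weakly along $N'$ for every $s\in\NNReals$, which is precisely the definition of $\hat{W}_{(G_k,s_k)} \convEstGP \W$.

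The only substantive obstacle is ensuring that the event of full probability can be chosen uniformly in $s$, so that the two conclusions combine to give a single a.s.\ statement; this is handled automatically by working on the level of the full adjacency measures $\KEGDinf{\cdot}$ on $\NNReals^2$ and then specializing to each $s$ after the a.s.\ event has been fixed. Everything else is routine bookkeeping combining \cref{estimation_w_size}, \cref{invrelabeling}, and \cref{graph_conv_equiv_adj_meas_conv}.
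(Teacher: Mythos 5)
Your proposal is correct and follows essentially the same route as the paper: extract from \cref{estimation_w_size} a subsequence along which the Prokhorov distance converges almost surely, then use the labeling kernel $K_s=\REmbed{\cdot,s}$ together with \cref{graph_conv_equiv_adj_meas_conv} (and the a.s.\ finiteness of size-$s$ graphs) to transfer weak convergence of the adjacency-measure laws to weak convergence of the unlabeled graph laws, simultaneously for all $s$ on one full-measure event. Your explicit continuity argument for the restriction maps is a minor elaboration of a step the paper leaves implicit, not a different approach.
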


\begin{proof}

We first treat the case (1) where the times $(s_k)$ are independent of $\KEG$.

Let $N \subseteq \Nats$ be an infinite sequence.
\cref{estimation_w_size} 
implies that
there is some infinite subsequence $N' \subseteq N$ such that, for all $r\in\NNReals$, 
$\KEGD{\hat{W}_{(G_k,s_k)}}{r} \to \KEGD{\W}{r}$ weakly almost surely along $N'$.

Let $r \in \NNReals$ and $K_r(\cdot) = \REmbed{\cdot,r}$. For all $k \in N'$,
\[
\GPD{\hat{W}_{(G_k,s_k)}}{r} K_r = \KEGD{\hat{W}_{(G_k,s_k)}}{r} \as,
\]
and $\GPD{\W}{r} K_r = \KEGD{\W}{r}$.
Moreover, the graph corresponding to a size-$r$ {\kegname} is almost surely finite.
Thus \cref{graph_conv_equiv_adj_meas_conv} applies and we have that $\GPD{\hat{W}_{(G_k,s_k)}}{r} \to \GPD{\W}{r}$ weakly a.s. along $N'$. 
This holds for all $r\in\NNReals$, so we have even that $\hat{W}_{(G_k, s_k)} \convEstGP \W \as$ along $N'$.

The same proof mutatis mutandis applies for convergence along the jump times.
\end{proof}

\begin{remark}
For graphexes such that $\edges(G_k)/s_k^3 \to 0 \as $ and $\vertices(G_k) = \Omega(s_k)$,
\cref{estimation_w_size} implies that $\hat{W}_{(G_k, s_k)} \convEstGP \W$ as $k \to \infty$ almost surely and not merely in probability.
The class of graphexes with these two properties includes all graphexes of the form $(0,0,W)$ for integrable $W$.
\end{remark}

\section{Estimation for unknown sizes}\label{est_wo_sizes}

We now turn to the case where only the graph structure of the {\kegname} is observed, 
rather than the graph structure and the sizes of the observation.

We first show how distinct adjacency measures can give rise to the same graph sequence. 
For a measurable map $\phi : \NNReals \to \NNReals$ and adjacency measure $\xi$, 
define $\xi^\phi$ to be the measure given by 
$\xi^{\phi}(A \times B) = \xi (\phi^{-1}(A) \times \phi^{-1}(B))$, for every measurable $A, B \subseteq \NNReals$.
The graph sequences underlying an adjacency measure $\xi$ is invariant to the action $\phi \mapsto \xi^\phi$ of every strictly monotonic and increasing function $\phi$.

\begin{prop}\label{invgs}
Let $\xi$ be an adjacency measure and
let $\phi: \NNReals \to \NNReals$ be strictly monotonic and increasing.
Then $\GS{\xi} = \GS{\xi^\phi}$.
\end{prop}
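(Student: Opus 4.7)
The plan is to show that strictly increasing relabeling is an isomorphism-on-atoms, so it induces a labeled graph isomorphism between every restriction $\chi_s\xi$ and a suitable restriction of $\xi^\phi$, leaving the unlabeled graph sequence unchanged.

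First I would unpack the definition of $\xi^\phi$. Since $\xi^\phi(A\times B) = \xi(\phi^{-1}(A)\times\phi^{-1}(B))$, writing $\xi = \sum_i \delta_{(x_i,y_i)}$ yields $\xi^\phi = \sum_i \delta_{(\phi(x_i),\phi(y_i))}$. Strict monotonicity ensures that $\phi$ is injective on the coordinate projection of the support of $\xi$, so $\xi^\phi$ is again a simple (purely atomic, locally finite) adjacency measure whose atoms are exactly the $\phi\otimes\phi$-images of the atoms of $\xi$.

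Second, I would track how the jump-counting function transforms. Set $E_\xi(s) = \tfrac{1}{2}\xi([0,s]^2)$ and $E_{\xi^\phi}(t) = \tfrac{1}{2}\xi^\phi([0,t]^2)$. Strict monotonicity gives the equivalence $\max(x,y)\le s \iff \max(\phi(x),\phi(y))\le \phi(s)$ on atoms, hence $E_{\xi^\phi}(\phi(s)) = E_\xi(s)$ for every $s \in \NNReals$. Since $\phi$ is order-preserving, the jumps of $E_{\xi^\phi}$ occur precisely at $\phi(\tau_1) < \phi(\tau_2) < \cdots$, where $\tau_1 < \tau_2 < \cdots$ are the jumps of $E_\xi$. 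Note that at values of $t$ outside the range of $\phi$, $t \mapsto E_{\xi^\phi}(t)$ is constant between successive points of $\phi(\tau_k)$, so neither non-surjectivity nor any possible discontinuities of $\phi$ introduce spurious jumps.

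Finally, for each $k$, the labeled graph corresponding to $\chi_{\tau_k}\xi$ has vertex set $V_k \subseteq [0,\tau_k]$ (the coordinate projection of its support), while $\chi_{\phi(\tau_k)}\xi^\phi$ has vertex set $\phi(V_k)$ with an edge $(\phi(x),\phi(y))$ exactly when $(x,y)$ is an edge of $\chi_{\tau_k}\xi$. Because $\phi$ restricted to $V_k$ is a bijection onto $\phi(V_k)$, this is a labeled graph isomorphism, and therefore $\CAM{\chi_{\tau_k}\xi} = \CAM{\chi_{\phi(\tau_k)}\xi^\phi}$ as unlabeled graphs. Reading the definition of the graph sequence termwise then gives $\GS{\xi} = (\CAM{\chi_{\tau_k}\xi})_{k\ge 1} = (\CAM{\chi_{\phi(\tau_k)}\xi^\phi})_{k\ge 1} = \GS{\xi^\phi}$. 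The argument is essentially bookkeeping; the only place where care is needed is the parenthetical observation that $\phi$ need not be continuous or surjective, which is settled by the atom-level description of $\xi^\phi$.
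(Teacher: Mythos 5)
Your proposal is correct and follows essentially the same route as the paper: both arguments rest on the observation that strict monotonicity makes $\phi$ a bijection on atoms, so the jump times satisfy $\tau_k^\phi = \phi(\tau_k)$ and the (unlabeled) graph at each jump is unchanged, giving $\GS{\xi} = \GS{\xi^\phi}$ termwise. Your termwise isomorphism $\CAM{\chi_{\tau_k}\xi} = \CAM{\chi_{\phi(\tau_k)}\xi^\phi}$ is just a slightly more explicit rendering of the paper's induction on edges added at each step.
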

\begin{proof}
Let $\{\tau_k\}$ and $\{\tau^\phi_k\}$ be the stopping sizes of $\xi$ and $\xi^\phi$, respectively.

Since $\phi$ is strictly monotonic it is also invertible. 
From this observation it is easily seen that $(\theta_i,\theta_j)$ is an atom of $\xi$
if and only if $(\phi(\theta_i),\phi(\theta_j))$ is an atom of $\xi^\phi$.
It is then clear that, for all $k \in \Nats$, $\phi(\tau_k) = \tau^\phi_k$  
and, moreover, the graph structure of $\{(x_i,\tau_k) : (x_i,\tau_k) \in \xi \}$ 
is equal to the graph structure of ${(y_i,\tau^\phi_k) : (y_i,\tau^\phi_k) \in \xi^\phi}$.
That is, the subgraph of all edges added at the $k$th step is equal for both graph sequences,
for all $k \in \Nats$. Moreover, the first entry of each graph sequence is 
(obviously) equal to the subgraph of all edges added at the first step.
The proof is then completed by induction.
\end{proof}

If $\phi$ is an arbitrary strictly monotonic mapping and $\xi$ is an exchangeable adjacency measure,
it will not generally be the case that $\xi^\phi$ is exchangeable. 
One family of mappings that preserves exchangeability is $\phi(x) = cx$, for $c \in \NNReals$.
We define the $c$-dilation of an adjacency measure $\xi$ to be the adjacency measure $\xi^\phi$ for this map. 
Because $\xi^\phi$ is exchangeable there is some graphex $\W'$ that generates it:
the next result shows that the $\frac 1 c$-dilation of a {\kegname} corresponds to a $c$-dilation of its graphex.

\newcommand{\IPP}{\PP^i}
\begin{lem}\label{cdilation}
Let $\KEG$ be a {\kegname} with graphex $\W = (I, S, W)$.
Then the $\frac 1 c$-dilation of $\KEG$ is a {\kegname} $\KEG'$ with generating graphex $\W' = (I',S',W')$ where $I' = c^{2} I$, $S'(x)=cS(x/c)$, and $W'(x,y)=W(x/c,y/c)$.
\end{lem}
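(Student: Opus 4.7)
The plan is to unwind the canonical construction in \cref{KEGgen} under the coordinate change $\phi(x)=x/c$ and recognize that, after absorbing the factor $c$ into the latent ``type'' coordinates, one obtains exactly the same construction, but now with the graphex $(c^2 I, cS(\cdot/c), W(\cdot/c,\cdot/c))$. The whole argument rests on the fact that Poisson processes are preserved by measure-preserving maps, and that scaling one coordinate by $c$ while scaling another coordinate by $1/c$ is such a map.

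More concretely, I would first write $\KEG$ via \cref{KEGgen}, using the independent unit-rate Poisson processes $\{(\theta_i,\vartheta_i)\}$, $\{(\sigma_{jk},\chi_{jk})\}_k$, and $\{(\rho_k,\rho'_k,\eta_k)\}$ together with the \iid uniforms $\zeta_{\{i,j\}}$. Applying the $1/c$-dilation sends each atom $(\theta_i,\theta_j)$ of $\KEG$ to $(\theta_i/c,\theta_j/c)$, and analogously for the $\theta_j,\sigma_{jk}$ star atoms and the $\rho_k,\rho'_k$ isolated-edge atoms. I would then introduce the rescaled latent variables
\[
\theta'_i=\theta_i/c,\quad \vartheta'_i=c\vartheta_i,\quad \sigma'_{jk}=\sigma_{jk}/c,\quad \chi'_{jk}=c\chi_{jk},\quad \rho''_k=\rho_k/c,\quad {\rho'}''_k=\rho'_k/c,\quad \eta''_k=c^2\eta_k.
\]
Because the maps $(\theta,\vartheta)\mapsto(\theta/c,c\vartheta)$ on $\NNReals^2$ and $(\rho,\rho',\eta)\mapsto(\rho/c,\rho'/c,c^2\eta)$ on $\NNReals^3$ are measure-preserving, the new tuples are again independent unit-rate Poisson processes on $\NNReals^2$, $\NNReals^2$, and $\NNReals^3$ respectively; the $\zeta_{\{i,j\}}$ remain \iid $\uniDist[0,1]$ and independent of everything else.

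Next I would rewrite the three sums in \cref{KEGgen} using the primed variables. The graphon-edge indicator becomes
\[
1[\zeta_{\{i,j\}}\le W(\vartheta_i,\vartheta_j)]=1[\zeta_{\{i,j\}}\le W(\vartheta'_i/c,\vartheta'_j/c)]=1[\zeta_{\{i,j\}}\le W'(\vartheta'_i,\vartheta'_j)],
\]
with the atom located at $(\theta'_i,\theta'_j)$; the star-edge indicator becomes
\[
1[\chi_{jk}\le S(\vartheta_j)]=1[\chi'_{jk}\le cS(\vartheta'_j/c)]=1[\chi'_{jk}\le S'(\vartheta'_j)],
\]
with atom $(\theta'_j,\sigma'_{jk})$; and the isolated-edge indicator becomes
\[
1[\eta_k\le I]=1[\eta''_k\le c^2 I]=1[\eta''_k\le I'],
\]
with atom $(\rho''_k,{\rho'}''_k)$. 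This is exactly \cref{KEGgen} applied to $\W'=(I',S',W')$ with the primed Poisson processes as latent data, so $\KEG^{\phi}$ is a \kegname\ generated by $\W'$.

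The only real obstacle is bookkeeping: making sure the coordinate rescalings chosen for each of the three components are simultaneously measure-preserving and that they induce the correct transformations of $I$, $S$, $W$. Once those rescalings are in hand (one factor of $c$ for each latent Poisson coordinate orthogonal to the $\theta$-axis), the identification of the new graphex is a direct substitution, and no further probabilistic input is needed.
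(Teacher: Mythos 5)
Your proposal is correct and follows essentially the same route as the paper: rescale the latent Poisson coordinates by $(\theta,\vartheta)\mapsto(\theta/c,c\vartheta)$ (and $(\rho,\rho',\eta)\mapsto(\rho/c,\rho'/c,c^2\eta)$), use that these maps are measure-preserving so the rescaled processes are again independent unit-rate Poisson processes, reuse the same uniforms, and match the indicator conditions to identify the dilated measure as the construction \cref{KEGgen} driven by $(c^2I,\,cS(\cdot/c),\,W(\cdot/c,\cdot/c))$. No gaps.
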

\begin{proof}
Let $\KEG$ be a {\kegname} generated by $W$ with latent Poisson processes $\PP, \PP_1, \PP_2, \dotsc$, and  $\IPP$ on $\NNReals^2$ and $\NNReals^3$, respectively. 
Define $f(\PP) = \{(\frac 1 c \theta, c \vartheta) \st (\theta,\vartheta) \in \PP\}$, 
define $f(\PP_n) = \{(\frac 1 c \sigma, c \chi) \st (\sigma,\chi) \in \PP_n\}$, for $n \in \Nats$, 
and define $f(\IPP) = \{(\frac 1 c \rho, \frac 1 c \rho', c^2 \eta) \st (\rho,\rho',\eta) \in \IPP \}$. 
Note that $f(\PP)$ and $f(\PP_n)$, for $n \in \Nats$, are unit-rate Poisson processes on $\NNReals^2$,
and $f(\IPP)$ is a unit-rate Poisson process on $\NNReals^3$.
Indeed, the joint law of $(\PP,\PP_1,\PP_2,\dotsc,\IPP)$ is the same as that of 
$(f(\PP),f(\PP_1),f(\PP_2),\dotsc,f(\IPP))$.

Then $\KEG'$, the $\frac 1 c $-dilation of $\KEG$, is the {\kegname} generated by 
$\W'$ with latent Poisson processes
$f(\PP),f(\PP_1),f(\PP_2),\dotsc,f(\IPP)$
\emph{reusing} the same i.i.d.\ collection $(\zeta_{\{i,j\}})$ in $[0,1]$ as was used to generate $\KEG$.
To see this, note that
$\KEG'$ includes edge $(\frac 1 c \theta_i,\frac 1 c \theta_j)$
 if and only if 
$\zeta_{\{i,j\}} \le W'(c\vartheta_i,c \vartheta_j)$ 
 if and only if 
$\zeta_{\{i,j\}} \le W(\vartheta_i,\vartheta_j)$ 
 if and only if 
$\KEG$ includes edge $(\theta_i,\theta_j)$. 
Similarly, 
$\KEG'$ includes edge $(\frac 1 c \theta_i, \frac 1 c \sigma_{ij})$ 
 if and only if 
$ c \chi_{ij} \le S'(c\vartheta) = c S(\vartheta)$
 if and only if 
$ \chi_{ij} \le S(\vartheta)$
 if and only if
$\KEG$ includes edge $(\theta_i, \sigma_{ij})$.
Finally,
$\KEG'$ includes edge $(\frac 1 c \rho,\frac 1 c \rho')$
 if and only if 
$c^2\eta \le I' = c^2 I$
 if and only if 
$\KEG$ includes edge $(\rho,\rho')$.
Thus $\KEG'$ is a $\frac 1 c$-dilation of $\KEG$, as was to be shown.
\end{proof}

Define the \defnphrase{$c$-dilation of a graphex $\W$} to be the graphex $\W'$ defined in the statement of \cref{cdilation}.
We have the following consequence: 

\begin{thm}\label{invdilation}
Let $\W$ be a graphex, 
let $\W'$ be the $c$-dilation of $\W$ for some $c>0$,
and let $\KEG$ and $\KEG'$ be {\kegnames} with graphexes $\W$ and $\W'$, respectively.
Then $\GS{\KEG} \equaldist \GS{\KEG'}$.
\end{thm}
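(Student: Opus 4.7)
The plan is to derive \cref{invdilation} as a direct consequence of the two preceding results, \cref{invgs} and \cref{cdilation}, by routing through the intermediate object $\KEG^{1/c}$, i.e., the $1/c$-dilation of the original graphex process $\KEG$.

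First I would observe that the map $\phi: \NNReals \to \NNReals$ defined by $\phi(x) = x/c$ is, for $c > 0$, strictly monotonic and increasing. Hence \cref{invgs} applies to the adjacency measure $\KEG$ with this $\phi$, yielding $\GS{\KEG} = \GS{\KEG^\phi}$ almost surely, where $\KEG^\phi$ is precisely the $1/c$-dilation of $\KEG$ in the sense defined just before \cref{cdilation}.

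Next I would invoke \cref{cdilation} with dilation factor $1/c$: it states that the $1/c$-dilation of a {\kegname} with graphex $\W=(I,S,W)$ is itself a {\kegname}, with generating graphex $(c^{2} I, c S(\cdot/c), W(\cdot/c,\cdot/c))$, which is exactly the $c$-dilation $\W' = \W^c$ of $\W$ as given by the definition immediately following \cref{cdilation}. Hence $\KEG^\phi$ is a {\kegname} with graphex $\W'$, and so $\KEG^\phi \equaldist \KEG'$. Since the assignment $\xi \mapsto \GS{\xi}$ is measurable, this equality in distribution lifts to $\GS{\KEG^\phi} \equaldist \GS{\KEG'}$.

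Combining the two relations gives $\GS{\KEG} = \GS{\KEG^\phi} \equaldist \GS{\KEG'}$, which is the claim. There is no substantive obstacle: all the work has been done in \cref{invgs} and \cref{cdilation}, and the proof is essentially bookkeeping — the only subtlety worth double-checking is the naming convention that a $c$-dilation of the graphex corresponds to a $1/c$-dilation of the adjacency measure, which is exactly what makes the two lemmas combine cleanly.
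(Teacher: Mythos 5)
Your proposal is correct and matches the paper's own argument, which simply cites \cref{cdilation,invgs}; you have merely spelled out the bookkeeping (taking $\phi(x)=x/c$ in \cref{invgs}, identifying the $1/c$-dilation of $\KEG$ as a graphex process with graphex $\W'$ via \cref{cdilation}, and lifting equality in distribution through the measurable map $\xi \mapsto \GS{\xi}$). No gaps.
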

\begin{proof}
Follows immediately from \cref{cdilation,invgs}.
\end{proof}

As a consequence of this result, if the observed data is the graph sequence---that is, if the size $s$ is unknown---then the dilation of the
generating graphex is not identifiable. Therefore, the notion of estimation that we used in the known-size setting is not appropriate, because it requires
$\GP_r(\W_n) \convDist \GP_r(\W)$ as $n \to \infty$ for all sizes $r\in\NNReals$.

The appropriate notion of estimation in this setting is then: 
\begin{defn}
  Let $\W, \W_1,\W_2,\dotsc$ be a sequence of graphexes,
  and let $\KEG,\KEG^1, \KEG^2,\dotsc$ be {\kegnames} generated by each graphex.
  Write $\W_k \convEstGS \W$ as $k \to \infty$ when
  $\GS{\KEG^k} \convDist \GS{\KEG}$ as $k \to \infty$.
\end{defn}
Note that this is equivalent to requiring convergence in distribution of the length-$l$ prefixes of the graph sequences, for all $l \in \Nats$.
Intuitively, a length-$l$ graph sequence generated by the estimator is close in distribution to a length-$l$ graph sequence generated
by the true graphex, provided the observed graph is large enough.
This perspective explains how a sequence of compactly supported graphexes can estimate a graphex that is not itself compactly supported.
The following is immediate from \cref{invdilation}.
\begin{cor}\label{convdil}
Let $\W, \W_1, \W_2, \dots$ be a sequence of graphexes, let $c,c_1,c_2,\dots >0$, and let $W^c,W_1^{c_1},W_2^{c_2},\dots$ be the corresponding dilations.
Then $\W_k \convEstGS \W$ as $k \to \infty$ if and only if  $\W_k^{c_k} \convEstGS \W^c$ as $k \to \infty$.
\end{cor}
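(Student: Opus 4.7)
The plan is to observe that this corollary follows essentially directly from \cref{invdilation}, which already establishes the key invariance: the distribution of the graph sequence is invariant under dilation of the generating graphex. Unwinding the definitions, $\W_k \convEstGS \W$ means precisely that $\GS{\KEG^k} \convDist \GS{\KEG}$, where $\KEG^k$ is generated by $\W_k$ and $\KEG$ by $\W$, and similarly $\W_k^{c_k} \convEstGS \W^c$ is a statement about the convergence of graph sequences associated with {\kegnames} generated by $\W_k^{c_k}$ and $\W^c$.

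First, I would let $\KEG, \KEG^1, \KEG^2, \dots$ be {\kegnames} generated by $\W, \W_1, \W_2, \dots$ respectively, and let $\KEG^c$ and $(\KEG^k)^{c_k}$ be {\kegnames} generated by $\W^c$ and $\W_k^{c_k}$. By \cref{invdilation} applied with dilation factor $c$, one has $\GS{\KEG} \equaldist \GS{\KEG^c}$, and similarly applied with dilation factor $c_k$ to each $\W_k$, one has $\GS{\KEG^k} \equaldist \GS{(\KEG^k)^{c_k}}$ for every $k \in \Nats$.

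Next, since convergence in distribution only depends on the laws of the random objects involved, $\GS{\KEG^k} \convDist \GS{\KEG}$ as $k \to \infty$ if and only if $\GS{(\KEG^k)^{c_k}} \convDist \GS{\KEG^c}$ as $k \to \infty$. Translating this equivalence back through the definition of $\convEstGS$ gives exactly $\W_k \convEstGS \W$ iff $\W_k^{c_k} \convEstGS \W^c$, which is the claim.

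There is no real obstacle here; the content of the corollary is entirely carried by \cref{invdilation}, and what remains is purely definitional bookkeeping. The only point worth being mindful of is that the definition of $\convEstGS$ is stated in terms of the laws of graph sequences of \emph{some} {\kegname} generated by each graphex, so one should verify (as above) that the equivalences used depend only on these laws and not on any particular coupling of the {\kegnames} across $k$.
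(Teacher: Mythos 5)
Your argument is correct and is exactly the paper's intended proof: the paper states the corollary is immediate from \cref{invdilation}, and your write-up simply spells out that the dilation-invariance of the law of the graph sequence, combined with the fact that $\convEstGS$ depends only on these laws, gives the equivalence. No gaps.
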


Intuitively speaking, $\W_k \convEstGS \W$ as $k \to \infty$
demands less than $\W_k \convEstGP \W$ as $k \to \infty$,
because in the former case we don't need to find a correct rate of dilation for the graphex.
The intuition that convergence in distribution of the graph sequence
is weaker than convergence in distribution of $(\CAM{\KEG_s})_{s\in\NNReals}$ is
borne out by the next lemma: 
\begin{lem}\label{gp_conv_implies_gs_conv}
Let $\W, \W_1, \W_2, \dots$ be graphexes where $\W$ is non-trivial and $\W_k \convEstGP \W$ as $k \to \infty$.
Then $\W_k \convEstGS \W$ as $k \to \infty$.
\end{lem}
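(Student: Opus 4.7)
The plan is to reduce the desired convergence $\W_k \convEstGS \W$ to convergence in distribution of length-$\ell$ prefixes of the graph sequence for every $\ell \in \Nats$, which is equivalent to convergence of the full sequence in the product topology on sequences of finite unlabeled graphs. Write $\pi_\ell(\xi) = (\CAM{\chi_{\tau_1}\xi}, \ldots, \CAM{\chi_{\tau_\ell}\xi})$ for this prefix.

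Fix $\ell \in \Nats$ and $s \in \NNReals$. The strategy is to localize the prefix to a compact window $[0,s]^2$, on which it becomes a function of the single finite unlabeled graph $\CAM{\KEG_s}$. On the event $\{\tau_\ell \le s\}$, the first $\ell$ jump times of $\KEG$ are at most $s$, so $\pi_\ell(\KEG) = \pi_\ell(\KEG_s)$ on this event. By \cref{invrelabeling}, $\KEG_s \equaldist \RLabel[s]{\CAM{\KEG_s}}$, so defining
\[
\Psi_\ell^s(G)(A) \defas \Pr\bigl(\pi_\ell(\RLabel[s]{G}) \in A,\ \tau_\ell(\RLabel[s]{G}) \le s\bigr)
\]
on finite unlabeled graphs $G$ gives a bounded measurable map satisfying $\EE[\Psi_\ell^s(\CAM{\KEG_s})(A)] = \Pr(\pi_\ell(\KEG) \in A,\ \tau_\ell \le s)$, and analogously for $\KEG^k$. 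The hypothesis $\W_k \convEstGP \W$ gives $\CAM{\KEG^k_s} \convDist \CAM{\KEG_s}$ for every $s$; since the space of finite unlabeled graphs is discrete, every bounded function on it is automatically continuous, so weak convergence yields
\[
\Pr(\pi_\ell(\KEG^k) \in A,\ \tau^k_\ell \le s) \longrightarrow \Pr(\pi_\ell(\KEG) \in A,\ \tau_\ell \le s), \qquad k \to \infty.
\]

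To remove the size restriction, apply the triangle inequality
\[
\bigl| \Pr(\pi_\ell(\KEG^k) \in A) - \Pr(\pi_\ell(\KEG) \in A) \bigr| &\le \bigl| \Pr(\pi_\ell(\KEG^k) \in A,\, \tau^k_\ell \le s) - \Pr(\pi_\ell(\KEG) \in A,\, \tau_\ell \le s) \bigr| \\
&\quad + \Pr(\tau^k_\ell > s) + \Pr(\tau_\ell > s).
\]
Non-triviality of $\W$ ensures $\tau_\ell < \infty$ a.s., so $\Pr(\tau_\ell > s) \to 0$ as $s \to \infty$. The tail under $\W_k$ is controlled because $\Pr(\tau^k_\ell \le s) = \Pr(\edges(\CAM{\KEG^k_s}) \ge \ell)$, and $\edges$ is continuous on the discrete graph space, so this converges to $\Pr(\tau_\ell \le s)$ as $k \to \infty$. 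Taking $\limsup_k$ followed by $s \to \infty$ yields the conclusion. The main subtlety is the interchange of the $k \to \infty$ and $s \to \infty$ limits, and this is supplied precisely by the convergence $\Pr(\tau^k_\ell \le s) \to \Pr(\tau_\ell \le s)$ at each $s$.
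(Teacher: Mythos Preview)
Your argument is correct and is a genuinely different route from the paper's. The paper proceeds by first showing that the \emph{nested} sequences $(\GS{\KEG^k_1},\dots,\GS{\KEG^k_n})$ converge in distribution to $(\GS{\KEG_1},\dots,\GS{\KEG_n})$ for each $n$, using a probability kernel $K_n$ from the unlabeled graph $\CAM{\KEG_n}$ to this tuple and \citep[Lem.~16.24]{Kallenberg:2001}; it then lifts to the infinite nested sequence and applies the continuous mapping theorem to a function that reads off the full graph sequence $\GS{\KEG}$ from the nested family. Your approach instead fixes $\ell$ and a window $[0,s]^2$, uses \cref{invrelabeling} to express $\Pr(\pi_\ell(\KEG)\in A,\ \tau_\ell\le s)$ as the expectation of a bounded function of the discrete variable $\CAM{\KEG_s}$, passes to the limit in $k$ by discreteness, and then removes the truncation via a tightness argument based on $\Pr(\tau^k_\ell\le s)=\Pr(\edges(\CAM{\KEG^k_s})\ge\ell)\to\Pr(\tau_\ell\le s)$ together with $\Pr(\tau_\ell>s)\to 0$ from non-triviality.

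Your route is somewhat more elementary: it avoids the auxiliary nested-sequence structure and the continuous-mapping step, replacing them with a transparent truncation-plus-tightness bound. The paper's route, by contrast, packages the dependence on $s$ once and for all into a kernel and is closer in spirit to the rest of the argument (cf.\ \cref{graph_conv_equiv_adj_meas_conv}). One small point worth making explicit in your write-up is that $\Pr(\tau^k_\ell=\infty)\le\Pr(\tau^k_\ell>s)\to\Pr(\tau_\ell>s)$, so the length-$\ell$ prefix of $\GS{\KEG^k}$ is well defined with probability tending to $1$; this is implicit in your bound but deserves a sentence, since the $\W_k$ are not assumed non-trivial.
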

\begin{proof}
Let $\KEG^k$ be {\kegnames} generated by $\W_k$, and let $\KEG$ be generated by $\W$. 
For $n \in \Nats$, let $G^k_n = \CAM{\KEG^k_n}$, and let $G_n = \CAM{\KEG_n}$.

Consider the sequence %
$H^k_n = (\GS{\KEG^k_1}, \GS{\KEG^k_2}, \dots, \GS{\KEG^k_n})$,
where each entry is itself an a.s.~finite graph sequence and entry $j$ is a prefix of entry $j+1$.
Let $\eta^k_n = \Pr(H^k_n \in \cdot)$, and let $\eta_n = \Pr((\GS{\KEG_1}, \GS{\KEG_2}, \dots, \GS{\KEG_n}) \in \cdot)$.
Intuitively speaking, we are breaking up the graph sequence of the entire {\kegname} into the graph 
sequences up to size $1,2,\dots$
and $\eta_n$ is the joint distribution of the first $n$ of these partial graph sequences.
Our short term goal is to show that $\eta^k_n \to \eta_n$ weakly as $k \to \infty$.

To that end, let $G$ be a finite graph and consider the random variable
\[
L_n(G) = (\GS{\RLabel[n]{G}([0,j)^2 \cap \cdot)})_{j=1,\dots,n}.
\]
This is a nested sequence of graph sequences given by mapping $G$ to an adjacency measure on $[0,n)^2$ and then
returning the sequence of graph sequences corresponding to this adjacency matrix at sizes $1,\dots,n$.
The significance of this construction is that we may use it to define a probability kernel,
\[
K_n(G,\cdot) = \Pr(L_n(G) \in \cdot),
\]
such that that $\Pr(G^k_n \in \cdot) K_n = \EE K_n(G^k_n,\cdot) = \eta^k_n$ and $\Pr(G_n \in \cdot) K_n = \EE K_n(G_n,\cdot) = \eta_n$.
By assumption, we have $\W_k \convEstGP \W$ as $k \to \infty$, whence $G^k_n \convDist G_n$ as $k \to \infty$.
By the discreteness of the space of finite graphs and \citep[][Lem.~16.24]{Kallenberg:2001}
it then holds that,
\[
\Pr(G^k_n \in \cdot) K_n \to \Pr(G_n \in \cdot)K_n,
\]
weakly as $k\to\infty$.
It thus holds by the construction of $K_n$ that 
\[
\eta^k_n \to \eta_n, \label{latent_size_conv}
\] 
weakly as $k\to\infty$.

We now have that an arbitrary length prefix of the graph sequence converges in distribution, when
the notion of length is given by the latent sizes. 
It remains to argue that this convergence holds for arbitrary prefixes in the usual sequence sense.
To that end, we observe that because \cref{latent_size_conv} holds for all $n \in \Nats$, by \citep[][Thm.~4.29]{Kallenberg:2001} it further holds that
\[
(\GS{\KEG^k_1}, \GS{\KEG^k_2}, \dots) \convDist (\GS{\KEG_1}, \GS{\KEG_2}, \dots),\ k\to\infty.\label{nested_gs_conv}
\] 
There is function $f$ such that, for every locally finite but infinite adjacency measure $\xi$ on $\NNReals^2$, with restrictions $\xi_j$ to $[0,j)^2$,
\[
   f(\GS{\xi_1}, \GS{\xi_2}, \dots) = \GS{\xi}
\]
and $f$ is even continuous because every finite prefix of $\GS{\xi}$ is determined by some finite prefix of the left hand side.
Extend $f$ to the space of all nested graph sequences arbitrarily.
Note that $f$ is continuous at $\KEG$ a.s.\ because $\KEG$ is a.s.\ locally finite and $\|W\|_1 > 0$ implies $\KEG$ is a.s.\ infinite.
Hence, the result follows by the continuous mapping theorem \citep[][Thm.~4.27]{Kallenberg:2001}.
\end{proof}

We now turn to establishing the main estimation result for the setting where the sizes are not
included as part of the observation.
In this setting, 
the observations are increasing sequences of graphs $G_1, G_2, \dots$. There are two natural models for the observations:
In one model,
$G_k = \CAM{\KEG_{s_k}}$
for some {\kegname} $\KEG$ and (possibly random, independent, and a.s.) increasing and diverging sequence of sizes $s_1, s_2, \dots$. 
Alternatively, in the other model,
the sequence $G_1, G_2, \dotsc$ is the graph sequence $\GS{\KEG}$ of some {\kegname} $\KEG$.

A natural estimator is the empirical graphon, $\empGraphon_{G_k}$, reflecting the intuition that
the dilation necessary in the previous section for convergence of the generated {\kegname}
is irrelevant for convergence in distribution of the associated graph sequence.
Somewhat more precisely, we view the empirical graphon as the canonical representative of the equivalence class of graphons given by equating graphons that induce the same distribution on graph sequences.
The main result of this section is that $\empGraphon_{G_k} \convEstGS \W$ as $k \to \infty$ in probability,
for either of the natural models for the observed sequence $G_1, G_2, \dotsc$.

\begin{thm}\label{estimation_wo_size}
Let $\KEG$ be a {\kegname} generated by some non-trivial graphex $\W$ and
let $G_1, G_2, \dots$ be some sequence of graphs such that either
\begin{enumerate}
\item There is some random sequence $(s_k)$, independent from $\KEG$, such that
$s_k \upto \infty$ a.s.\ and $G_k = \CAM{\KEG_{s_k}}$ for all $k \in \Nats$, or
\item $(G_1, G_2, \dots) = \GS{\KEG}$.
\end{enumerate}
Then, for every infinite sequence $N \subseteq \Nats$, there is an infinite subsequence $N' \subseteq N$, such that
\[
\empGraphon_{G_k} \convEstGS \W \as,
\] 
along $N'$.
\end{thm}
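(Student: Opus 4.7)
The plan is to reduce this to the known-size estimation result \cref{estimation_w_size_no_labels} by exploiting the fact that dilating a graphex leaves the law of its graph sequence unchanged (\cref{invdilation}). The dilated empirical graphon $\hat{W}_{(G_k, s_k)}$ is, by its definition as a step function on pixels of size $1/s_k$, precisely the $c_k$-dilation of the (undilated) empirical graphon $\empGraphon_{G_k}$, for $c_k = \vertices(G_k)/s_k$. So the dilation that distinguishes the known-size estimator from the unknown-size estimator is immaterial for graph sequence convergence, and the only real content is to pass from $\convEstGP$ convergence (already established) to $\convEstGS$ convergence.

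Concretely, fix an infinite $N \subseteq \Nats$. In case (1) the sizes $(s_k)$ are given; in case (2) we take $s_k = \tau_k$, the jump times of $\KEG$. Either way, \cref{estimation_w_size_no_labels} applies and yields an infinite subsequence $N' \subseteq N$ along which
\[
\hat{W}_{(G_k, s_k)} \convEstGP \W \quad\text{a.s.}
\]
On this almost sure event, the deterministic sequence of graphexes $(\hat{W}_{(G_k, s_k)})_{k \in N'}$ converges to $\W$ in the $\convEstGP$ sense, so \cref{gp_conv_implies_gs_conv} (which needs only that $\W$ is non-trivial, which is assumed) applies pointwise to give
\[
\GS{\tilde \KEG^k} \convDist \GS{\KEG} \quad\text{a.s. along } N',
\]
where $\tilde \KEG^k$ is a \kegname{} generated by $\hat{W}_{(G_k, s_k)}$.

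To conclude, let $\KEG^k$ be a \kegname{} generated by $\empGraphon_{G_k}$. Since $\hat{W}_{(G_k, s_k)}$ is the $c_k$-dilation of $\empGraphon_{G_k}$, \cref{invdilation} (applied conditionally on $G_k$ and $s_k$, for each fixed realization of $c_k$) gives $\GS{\tilde \KEG^k} \equaldist \GS{\KEG^k}$, so the convergence above transfers to $\GS{\KEG^k} \convDist \GS{\KEG}$ a.s. along $N'$. This is by definition $\empGraphon_{G_k} \convEstGS \W$ a.s. along $N'$, which is the claim.

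The only real subtlety is the pointwise application of \cref{gp_conv_implies_gs_conv}, whose hypothesis and conclusion are stated for deterministic graphex sequences with a fixed limit, to the random sequence $(\hat{W}_{(G_k, s_k)})$: one needs to know that $\{\hat{W}_{(G_k, s_k)} \convEstGP \W\}$ is a measurable event (it is, being a countable intersection over $s \in \Nats$ of weak-convergence events on a Polish space), and then the implication holds on each $\omega$ in this event. Everything else is bookkeeping: in case (2) the sizes are not independent of $\KEG$, but this is already absorbed by the second alternative of \cref{estimation_w_size_no_labels}, and the dilation invariance does not care whether the dilation factor $c_k$ is random.
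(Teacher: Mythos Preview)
Your proof is correct and follows essentially the same route as the paper: invoke \cref{estimation_w_size_no_labels} to get $\hat{W}_{(G_k,s_k)} \convEstGP \W$ a.s.\ along a subsequence, upgrade this to $\convEstGS$ via \cref{gp_conv_implies_gs_conv}, and then use dilation invariance (the paper cites \cref{convdil}, you use \cref{invdilation} directly) to pass from $\hat{W}_{(G_k,s_k)}$ to $\empGraphon_{G_k}$. Your additional remark on the measurability of the event $\{\hat{W}_{(G_k,s_k)} \convEstGP \W\}$ is a point the paper leaves implicit.
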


\begin{proof}
We prove case (1). Case (2) follows mutatis mutandis,
substituting $\tau_k$ for $s_k$.

Let $\hat{W}_{(G_k,s_k)}$ denote the dilated empirical graphon of $G_k$ with observation size $s_k$.
By \cref{estimation_w_size_no_labels}, for every sequence $N \subseteq \Nats$, there is an infinite subsequence $N' \subseteq N$, 
such that $\hat{W}_{(G_k,s_k)} \convEstGP \W$ along $N'$.
By \cref{gp_conv_implies_gs_conv} and $\W$ being non-trivial, this implies that $\hat{W}_{(G_k,s_k)} \convEstGS \W$ along $N'$.
For every $k$, $\empGraphon_{G_k}$ is some dilation of $\hat{W}_{(G_k,s_k)}$, hence, the result follows by \cref{convdil}.
\end{proof}

\section*{Acknowledgements}
The authors would like to thank
Christian Borgs, Jennifer Chayes, and Henry Cohn
for helpful discussions.
This work was supported by U.S. Air Force Office of Scientific
Research grant \#FA9550-15-1-0074.

\printbibliography

\vfill

\end{document}